\newtheorem{thm}{Theorem}
\numberwithin{thm}{section}
\newtheorem{lem}[thm]{Lemma}
\newtheorem{prop}[thm]{Proposition}
\newtheorem{cor}[thm]{Corollary}
\newtheorem{defn}[thm]{Definition}
\newtheorem{example}[thm]{Example}
\theoremstyle{remark}
\newtheorem{remark}[thm]{Remark}
\newcommand{\R}{\mathbb{R}}
\newcommand{\RP}{\mathbb{R}P}
\newcommand{\CP}{\mathbb{C}P}
\renewcommand{\P}{\mathcal{P}}
\newcommand{\tildetrue}{\raise.17ex\hbox{$\scriptstyle\sim$}}
\newcommand{\Z}{\mathbb{Z}}
\newcommand{\into}{\hookrightarrow}
\newcommand{\ol}{\overline}
\newcommand{\TC}{\operatorname{TC}}
\newcommand{\cat}{\operatorname{cat}}
\newcommand{\genus}{\operatorname{genus}}
\title{Relative topological complexity of a pair}
\author{Robert Short}
\begin{document}

\begin{abstract}
For a pair of spaces $X$ and $Y$ such that $Y \subseteq X$, we define the relative topological complexity of the pair $(X,Y)$ as a new variant of relative topological complexity.  Intuitively, this corresponds to counting the smallest number of motion planning rules needed for a continuous motion planner from $X$ to $Y$.  We give basic estimates on the invariant, and we connect it to both Lusternik-Schnirelmann category and topological complexity.  In the process, we compute this invariant for several example spaces including wedges of spheres, topological groups, and spatial polygon spaces.  In addition, we connect the invariant to the existence of certain types of axial maps.
\end{abstract}

\maketitle

\section{Introduction}

Topological complexity ($\TC$) is an invariant introduced by Michael Farber in \cite{farberTC} connecting a motion planning problem in robotics with algebraic topology.  Intuitively, we think of topological complexity as the smallest number of ``rules'' needed to form a continuous motion planning algorithm on a topological space $X$.  For robotics, we think of $X$ as the configuration space of some robot, and our motion planning algorithm outputs paths between the configurations in $X$. Continuity then requires that the paths remain ``close'' when the configurations are ``close'' in some sense.  It turns out that $\TC(X)$ only depends on the topology of $X$, and so topologists study the invariant applied to various topological spaces in the abstract sense rather than as configuration spaces of specific robots.

In the years since its introduction, several different variations of topological complexity have been studied pertaining to different motion planning problems.  Of interest to us is relative topological complexity.  Here, we restrict which configurations are allowed to be starting and ending configurations, but we permit the path to move within a larger configuration space.  This variant is mentioned in Farber's book on the subject \cite{farber}, and it is used there to prove that $\TC(X)$ is a homotopy invariant.  In this paper, we restrict our attention further to a certain method for choosing starting and ending configurations.

Our invariant is motivated by the following motion planning problem.  Suppose there was a robot with configuration space $X$, and the robot is given to us in an arbitrary configuration within $X$.  Our goal is to plan the robot's motion to a configuration within some specified subset $Y \subseteq X$.  Then the relative topological complexity of the pair $(X,Y)$ is the smallest number of ``rules'' needed to form a continuous motion planning algorithm on $X$ where the paths must end in $Y$.  This restriction provides two major advantages.  First, we are able to develop some standard tools for estimating this value, which we do in Section 2.  Then, there are natural relationships between this value and both $\TC(X)$ and the Lusternik-Schnirelmann category of $X$ which we explore in Section 3.

In Section 4, we apply this new variant of relative topological complexity to pairs of real projective spaces.  In so doing, we draw a deep connection to the existence of certain axial maps.  We draw this connection explicitly in Theorem \ref{axial}.  This connection follows a similar logic to \cite{farberRP}, where Farber, Tabachnikov, and Yuzvinsky connect the immersion dimension of real projective spaces to their topological complexity using axial maps.

Finally, in Section 5, we apply this new variant to pairs of spatial polygon spaces.  These have been studied by Hausmann and Knutson in \cite{hausknut} as well as by Davis in \cite{davis}.  In our study, we introduce new notation for interesting submanifolds of the spatial polygon spaces for consideration using our variant.  We also compute the relative topological complexity for pairs of spatial polygon spaces, relying upon the symplectic structure of these spaces.

This work is a piece of the author's PhD thesis under the supervision of Don Davis.  We are grateful for his guidance and support throughout this process, and for giving productive comments on early drafts.  We would also like to thank Jean-Claude Hausmann, Jesus Gonzalez, Steve Scheirer, Alan Hylton, and Brian Klatt for various productive and interesting conversations over the course of this project.

\section{Basic Estimates}
We begin by reframing the intuitions established in the introduction in terms of the Schwarz genus of a fibration.  This was introduced by Schwarz in \cite{schwarz}.

\begin{defn}
Let $f:E\to B$ be a fibration.  The \textit{Schwarz genus of $f$}, denoted $\genus(f)$, is the smallest $k$ such that there exists $\{U_i \}_{i=1}^k$ an open cover of $B$ along with sections $s_i:U_i \to E$ of $f$.
\end{defn}

To apply this to topological complexity, note that there is a natural fibration $p:PX\to X\times X$ where $PX$ is the space of paths in $X$.  This fibration assigns to each path in $X$ the endpoints, i.e. $p(\sigma)=(\sigma(0),\sigma(1))$.  A section of this fibration is a way to assign a path to a pair of points in $X$, aligning this with a motion planning rule in the intuitive notion.  Formally, we are left with the following definition for $\TC(X)$.

\begin{defn}
Let $p:PX\to X\times X$ be the fibration defined above.  Then, the \textit{topological complexity of $X$} is the Schwarz genus of $p$, or in other words $\TC(X)=\genus(p)$.
\end{defn}

It is worth noting that the definition we are using is the unreduced version of topological complexity used by Farber in \cite{farber}.  Many researchers in this field use a reduced version of topological complexity where $\overline{\TC}(X) = \TC(X)-1$.  We will use the unreduced version throughout this paper.

In addition to $\TC(X)$, Farber also introduced a relative topological complexity for general subsets of $X\times X$.  Again, this is defined in terms of Schwarz genus, but here the motion planning rule is to only move between select pairs of points within $X \times X$.

\begin{defn}{\cite{farber}}
If $A \subseteq X \times X$, the \textit{relative topological complexity}, denoted $\TC_X(A)$, is the Schwarz genus of the pullback fibration over $A$ induced by the inclusion map.
\end{defn}

If we consider $A$ as the set of pairs of allowed configurations in the intuitive notion of relative topological complexity, then this tracks the smallest number of rules needed to move through $X$ where the pairs of starting and ending points must lie in $A$.

\subsection{Relative Topological Complexity of a Pair}

As a variant of relative topological complexity, we consider the following problem.  Suppose we had a specified set of target configurations $Y \subseteq X$.  We wish to determine the smallest number of rules needed to create a continuous motion planner from any configuration in $X$ to a configuration in $Y$.  This natural question is answered by our new variant.

\begin{defn}
Let $Y \subseteq X$.  Let $P_{X\times Y} = \{\gamma\in PX|\gamma(0)\in X \text{ and }\gamma(1)\in Y\}$.  There is a natural fibration $P_{X\times Y} \overset{\pi}{\longrightarrow} X \times Y$ where $\pi(\gamma)=(\gamma(0),\gamma(1))$.  Then, the \textit{relative topological complexity of the pair $(X,Y)$} is the Schwarz genus of $\pi$.  In other words, $\TC(X,Y)=\genus(\pi)$.
\end{defn}

One can also think of the fibration $\pi$ as the pullback of the usual topological complexity fibration induced by the inclusion map $X\times Y \into X\times X$.  Doing so, we can immediately get a convenient upper bound on $\TC(X,Y)$ thanks to a theorem from Schwarz.

\begin{prop}{\cite[Prop 7]{schwarz}}
Let $p:E\to B$ be a fibration and suppose $i:A\to B$ is a continuous map.  Let $i^*p:i^*E\to A$ be the pullback fibration over $A$ induced by $i$.  Then, $\genus(i^*p) \leq \genus(p)$.
\end{prop}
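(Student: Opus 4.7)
My approach is to start from an open cover of $B$ that realizes $\genus(p)$ and transport both the cover and the sections to the pullback over $A$ via the universal property. The proposition is ultimately a formal consequence of the pullback construction, so the main work is bookkeeping rather than any deep insight.

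First I would fix the explicit model of the pullback,
\[
i^*E = \{(a,e) \in A \times E : i(a) = p(e)\},
\]
with pullback fibration $i^*p(a,e) = a$ and canonical map $\tilde{i}: i^*E \to E$ sending $(a,e) \mapsto e$; these satisfy $p \circ \tilde{i} = i \circ i^*p$. Let $k = \genus(p)$, realized by an open cover $\{U_1,\ldots,U_k\}$ of $B$ together with sections $s_j: U_j \to E$ of $p$. Set $V_j := i^{-1}(U_j)$. By continuity of $i$ each $V_j$ is open in $A$, and the collection $\{V_1, \ldots, V_k\}$ covers $A$ because the $U_j$ cover $B$.

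Next I would define candidate sections $\tilde{s}_j: V_j \to i^*E$ by
\[
\tilde{s}_j(a) = \bigl(a,\; s_j(i(a))\bigr).
\]
The identity $p \circ s_j = \mathrm{id}_{U_j}$ gives $p(s_j(i(a))) = i(a)$, so $\tilde{s}_j(a)$ really lies in $i^*E$. Continuity is immediate from continuity of $i$ and $s_j$, and $i^*p \circ \tilde{s}_j = \mathrm{id}_{V_j}$ by construction. Thus $\{V_1, \ldots, V_k\}$ is an open cover of $A$ of cardinality $k$, each piece of which admits a local section of $i^*p$, so $\genus(i^*p) \leq k = \genus(p)$.

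I do not expect any genuine obstacle: the content of the statement is just that sections pull back to sections, which is exactly the universal property of the pullback applied locally. The only mildly delicate point is whether one wants to discard empty $V_j$ to keep the covers honest, but this is cosmetic since empty open sets are allowed in the definition of Schwarz genus and trivially admit (empty) sections; if one prefers, one can simply omit any $V_j = \emptyset$ and the bound only improves.
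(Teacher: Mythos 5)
Your proof is correct and is the standard argument for this fact. Note, however, that the paper does not prove this proposition at all: it is stated with a citation to Schwarz's original work (\cite[Prop 7]{schwarz}) and used as a black box, so there is no internal proof to compare against. What you wrote is exactly the argument one would expect in Schwarz's paper or in any treatment of sectional category: preimages of the cover under $i$ give an open cover of $A$, and the universal property of the pullback (or, concretely, the formula $\tilde{s}_j(a) = (a, s_j(i(a)))$) produces local sections of $i^*p$ from local sections of $p$. Your closing remark about empty $V_j$ is a reasonable observation and, as you say, immaterial.
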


We can easily get the following corollary to this proposition.

\begin{cor}\label{RelTC<TC}
For $Y \subseteq X$, $\TC(X,Y) \leq \TC(X)$.
\end{cor}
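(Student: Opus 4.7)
The plan is to observe that the fibration $\pi: P_{X\times Y} \to X\times Y$ defining $\TC(X,Y)$ is literally the pullback of the standard topological complexity fibration $p: PX \to X\times X$ along the inclusion $i: X\times Y \hookrightarrow X\times X$, and then apply Proposition 2.5 directly.

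The first step is to verify the pullback description carefully. Unpacking the definition, the pullback $i^*PX$ consists of pairs $(\gamma,(x,y))$ with $\gamma \in PX$ and $(x,y) \in X\times Y$ satisfying $p(\gamma) = i(x,y)$, i.e. $(\gamma(0),\gamma(1)) = (x,y)$. The second coordinate is thus determined by $\gamma$, so the projection onto the first factor gives a homeomorphism $i^*PX \cong P_{X\times Y}$, and under this identification the pullback fibration $i^*p$ agrees with $\pi$. This is exactly the statement the author makes immediately after introducing the definition of $\TC(X,Y)$.

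With that identification in hand, the second step is a one-line application of Proposition 2.5: since $\TC(X) = \genus(p)$ and $\TC(X,Y) = \genus(\pi) = \genus(i^*p)$, the inequality $\genus(i^*p) \leq \genus(p)$ from Schwarz's theorem yields $\TC(X,Y) \leq \TC(X)$.

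There is no real obstacle here, which is why the statement is labeled a corollary. The only place one could slip is in the bookkeeping of the pullback identification, so I would spell that out explicitly rather than leaving it implicit. An alternative and equally short route, if one prefers to avoid appealing to the pullback formalism, is to take any open cover $\{U_i\}_{i=1}^{\TC(X)}$ of $X\times X$ together with sections $s_i : U_i \to PX$ of $p$, intersect with $X\times Y$ to obtain $V_i = U_i \cap (X\times Y)$, and note that the restrictions $s_i|_{V_i}$ land in $P_{X\times Y}$ and serve as sections of $\pi$; this directly exhibits a cover of $X\times Y$ of size at most $\TC(X)$ with sections of $\pi$, giving the inequality.
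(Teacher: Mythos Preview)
Your proof is correct and follows exactly the approach the paper intends: the text immediately preceding the corollary notes that $\pi$ is the pullback of the usual $\TC$ fibration along $X\times Y \hookrightarrow X\times X$, and then Proposition~2.5 gives the inequality at once. The only difference is that you spell out the pullback identification and also sketch the direct open-cover argument, whereas the paper simply records the corollary without further comment.
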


This provides our first upper bound on relative topological complexity of a pair.  Both Farber and Schwarz give other methods for finding upper bounds of these values (in \cite{farber}, and \cite{schwarz} respectively).  We will refer to Schwarz here as we are framing our results primarily in terms of Schwarz genus.

\begin{thm}{\cite[Thm 5]{schwarz}}\label{schwarzub}
Let $F\to E \xrightarrow{p} B$ be a fibration where $\pi_j(F)=0$ for $j<s$.  Then, $$\genus(p)<\tfrac{\dim(B)+1}{s+1}+1$$
\end{thm}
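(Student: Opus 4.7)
The plan is to express $\genus(p)$ via Schwarz's iterated fiberwise join construction and then finish with an obstruction-theoretic argument on the resulting fibration.

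First, I would appeal to the key structural fact (also due to Schwarz) that $\genus(p) \leq k$ if and only if the $k$-fold fiberwise join
$$p^{*k} \colon E *_B E *_B \cdots *_B E \longrightarrow B$$
admits a continuous section. The fiber of $p^{*k}$ is the ordinary $k$-fold join $F^{*k}$, so the problem reduces to deciding when a fibration whose fiber is a large join admits a global section.

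Next, I would compute the connectivity of $F^{*k}$. Using the classical fact that the join $X * Y$ of an $m$-connected space and an $n$-connected space is $(m+n+2)$-connected, a straightforward induction on $k$ shows that $F^{*k}$ is $\bigl(k(s+1)-2\bigr)$-connected whenever $F$ is $(s-1)$-connected, which is exactly the hypothesis $\pi_j(F)=0$ for $j<s$.

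Finally, I would apply obstruction theory over $B$. On a CW base of dimension $d$, the obstructions to building a section of a fibration with fiber $\Phi$ lie in $H^{j+1}(B;\pi_j(\Phi))$ and hence all vanish once $\pi_j(\Phi)=0$ for every $j \leq d-1$. For our fiber this reduces to the inequality $k(s+1)-2 \geq \dim(B)-1$, i.e.\ $k \geq \tfrac{\dim(B)+1}{s+1}$. Choosing $k = \left\lceil \tfrac{\dim(B)+1}{s+1} \right\rceil$ therefore produces a section of $p^{*k}$, and since $\lceil x\rceil < x+1$, the bound $\genus(p) < \tfrac{\dim(B)+1}{s+1}+1$ follows.

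The main obstacle is justifying that obstruction theory applies cleanly in this generality: one needs $B$ to have the homotopy type of a CW complex (or one must adapt the argument to more general bases), and the fiberwise join $p^{*k}$ must be shown to be a fibration with the right section-extension property. These technicalities are the core of Schwarz's original paper, and I would cite them rather than reprove them.
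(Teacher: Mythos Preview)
The paper does not actually prove this theorem: it is stated as \cite[Thm~5]{schwarz} and used as a black box, with only its corollaries (Corollary~\ref{schwarzubcor}) given a short argument. So there is no ``paper's own proof'' to compare against.

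That said, your sketch is the standard argument, and it is essentially how Schwarz proves it: characterize $\genus(p)\leq k$ via a section of the $k$-fold fiberwise join, compute the connectivity of $F^{*k}$, and clear the obstructions over a finite-dimensional CW base. Your caveats about needing $B$ to be CW and $p^{*k}$ to be a genuine fibration are exactly the technical hypotheses Schwarz works under, so citing his paper for those points is appropriate. In short, your proposal is correct and aligns with the original source; the present paper simply quotes the result without reproducing the proof.
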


Some easy corollaries of this are listed below:
\begin{cor}\label{schwarzubcor}
	Let $\pi:P_{X\times Y} \to X \times Y$ be the fibration defining $\TC(X,Y)$.  Suppose $\pi_j(X)=0$ for $j\leq s$.  Then:
	\begin{enumerate}
		\item $\TC(X,Y) <\tfrac{\dim(X)+\dim(Y)+1}{s+1}+1$
		\item $\TC(X,Y) \leq \dim(X)+\dim(Y)+1$
	\end{enumerate}
\end{cor}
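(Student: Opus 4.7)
The plan is to deduce both parts from a direct application of Theorem \ref{schwarzub} to the fibration $\pi: P_{X\times Y} \to X\times Y$ that defines $\TC(X,Y)$. The base is $X \times Y$, so $\dim(B) = \dim(X) + \dim(Y)$, which is the data we need for the right-hand side of Schwarz's bound. The remaining ingredient is the connectivity of the fiber.

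First, I would identify the fiber of $\pi$ over a point $(x_0,y_0)$ as the space $P(X;x_0,y_0)$ of paths in $X$ from $x_0$ to $y_0$. The hypothesis $\pi_j(X)=0$ for $j\leq s$ (in particular, $\pi_0(X)=0$) ensures $X$ is path-connected, so choosing any reference path from $x_0$ to $y_0$ and concatenating gives a homotopy equivalence $P(X;x_0,y_0)\simeq \Omega X$. Then I would use the standard identification $\pi_j(\Omega X)\cong \pi_{j+1}(X)$ to translate the hypothesis into a connectivity statement for the fiber: $\pi_j(F) = \pi_{j+1}(X) = 0$ for $j+1\leq s$, i.e., for $j<s$. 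This matches the connectivity hypothesis of Theorem \ref{schwarzub} exactly.

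Plugging $\dim(B)=\dim(X)+\dim(Y)$ and the connectivity of $F$ into Theorem \ref{schwarzub} yields
\[
\TC(X,Y)=\genus(\pi) < \frac{\dim(X)+\dim(Y)+1}{s+1}+1,
\]
which is part (1). For part (2), I would specialize to the worst case $s=0$, which uses only that $X$ is path-connected. Part (1) with $s=0$ gives $\TC(X,Y) < \dim(X)+\dim(Y)+2$, and since $\TC(X,Y)$ is a positive integer this rounds down to $\TC(X,Y)\leq \dim(X)+\dim(Y)+1$.

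There is no real obstacle here: the entire argument is a specialization of Schwarz's general bound, with the only minor bookkeeping being the index shift when passing from the connectivity of $X$ to that of $\Omega X$. The path-connectedness of $X$ required to identify the fiber with $\Omega X$ is guaranteed by the hypothesis in any case $s\geq 0$.
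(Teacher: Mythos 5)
Your proof is correct and follows essentially the same route as the paper: apply Theorem \ref{schwarzub} to $\pi$ with base $X\times Y$ and fiber (homotopy equivalent to) $\Omega X$, translating $\pi_j(X)=0$ for $j\leq s$ into $\pi_j(\Omega X)=0$ for $j<s$, then derive part (2) from part (1). The only cosmetic difference is that you specialize the inequality to $s=0$ while the paper bounds the fraction $\tfrac{\dim(X)+\dim(Y)+1}{s+1}+1 \leq \dim(X)+\dim(Y)+2$ directly; both reductions are immediate and equivalent.
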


\begin{proof}
For (1), note that in the fibration defining $\TC(X)$, the fiber is $\Omega X$, the loopspace of $X$.  Since $\pi:P_{X\times Y} \to X\times Y$ is the pullback of that fibration, it has the same fiber.  Thus, the condition that we need from Theorem \ref{schwarzub} is that $\pi_j(\Omega X) =0$ for $j<s$.  This is the same as $\pi_j(X)=0$ for $j\leq s$, which is as we assumed.

For (2), we need only notice that $\frac{\dim(X) + \dim(Y) +1}{s+1} +1 \leq \dim(X) + \dim(Y) +2$.  Thus, $\TC(X,Y) < \dim(X) + \dim(Y) +2$, so $\TC(X,Y) \leq \dim(X) + \dim(Y) +1$.
\end{proof}

In addition to upper bounds, we can use cohomology to determine lower bounds on these values.  Schwarz provides a cohomological lower bound on Schwarz genus in \cite{schwarz}, but Farber improves upon this when he provides a lower bound for $\TC(X)$ in \cite{farber}.  We follow Farber's example and prove a similar result here for $\TC(X,Y)$, and the proof adds some details that can be useful in computing lower bounds using cohomology.

\begin{defn}
The zero-divisors of the diagonal inclusion map are defined as $$Z(X\times Y) = \ker (H^*(X\times Y) \overset{\Delta_Y^*}{\longrightarrow} H^*(Y)).$$ 
\end{defn}

\begin{remark}
Notice that $P_{X\times Y} \simeq Y$ and this homotopy equivalence is induced by the map $c$ sending points in $Y$ to their constant maps in $P_{X\times Y}$.  Moreover, we can use the commutative diagram below to give an equivalent definition for $Z(X\times Y)$.

{\centering
\begin{tikzcd}
  & & P_{X\times Y} \arrow[d,"\pi"]\\
 Y \arrow[urr,"c",bend left=10] \arrow[r,"\Delta"] \arrow[rr,"\Delta_Y"',bend right] & Y\times Y \arrow[r, "\iota \times \text{id}"] & X \times Y 
\end{tikzcd}
\par}

Using the above diagram, we get that $Z(X\times Y) = \ker(H^*(X\times Y) \overset{\pi^*}{\longrightarrow} H^*(P_{X\times Y}))$.

Also, if we take cohomology with coefficients in a field, we can use the Kunneth theorem for cohomology to get that $Z(X \times Y) = \ker(H^*(X)\otimes H^*(Y) \overset{\iota^* \otimes id^*}{\longrightarrow} H^*(Y))$.
\end{remark}

\begin{thm}\label{cohomlb}
If there is a non-zero, $k$-fold product of elements in $Z(X \times Y)$, then we have that $\TC(X,Y) >k$.
\end{thm}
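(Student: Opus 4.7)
The plan is to argue by contrapositive, following the classical Schwarz--Farber template: if $\TC(X,Y)\le k$, then every $k$-fold product of zero-divisors must vanish.

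First, I would set up the machinery of relative cohomology. Assume $\TC(X,Y) \le k$, so there is an open cover $U_1, \ldots, U_k$ of $X \times Y$ together with local sections $s_i : U_i \to P_{X \times Y}$ of $\pi$. The crucial observation is that for any $u \in Z(X \times Y)$ and any such $U_i$, the restriction $u|_{U_i}$ vanishes. Indeed, by the Remark preceding the theorem, $Z(X \times Y) = \ker \pi^*$, and if $j_i : U_i \hookrightarrow X \times Y$ denotes the inclusion, then $j_i = \pi \circ s_i$, so $j_i^* u = s_i^* \pi^* u = 0$. Consequently, by the long exact sequence of the pair $(X \times Y, U_i)$, there exists a relative lift $\tilde u_i \in H^*(X \times Y, U_i)$ mapping to $u$ under the natural map $H^*(X \times Y, U_i) \to H^*(X \times Y)$.

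Second, I would invoke the standard cup product pairing
\[
H^*(X \times Y, U_{i_1}) \otimes \cdots \otimes H^*(X \times Y, U_{i_k}) \longrightarrow H^*\bigl(X \times Y, U_{i_1} \cup \cdots \cup U_{i_k}\bigr),
\]
which is compatible with the maps to absolute cohomology. Given $u_1, \ldots, u_k \in Z(X \times Y)$, pick relative lifts $\tilde u_1, \ldots, \tilde u_k$ as above. Their cup product $\tilde u_1 \cup \cdots \cup \tilde u_k$ lives in $H^*(X \times Y, U_1 \cup \cdots \cup U_k)$. Since the $U_i$ cover $X \times Y$, this group equals $H^*(X \times Y, X \times Y) = 0$, so the cup product is zero relatively. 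Mapping down to absolute cohomology, we conclude $u_1 \cup \cdots \cup u_k = 0$ in $H^*(X \times Y)$. Contrapositively, if some $k$-fold product of zero-divisors is non-zero, then $\TC(X,Y) > k$.

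The main obstacle, and the reason I would lean on the Remark preceding the theorem, is identifying the kernel of $\pi^*$ with the kernel of $\Delta_Y^*$; once the homotopy equivalence $c : Y \to P_{X \times Y}$ is used to replace $\pi^*$ by $\Delta_Y^*$, the rest is a formal manipulation of relative cup products. Two minor technicalities worth mentioning in the writeup are (i) that the open cover may need to be slightly shrunk to pairs $(V_i, \overline{V_i} \subseteq U_i)$ so that the excision/relative lifts are unambiguous (in paracompact situations, which includes everything of interest here, this is automatic), and (ii) that one should record that the relative lifts are genuine lifts, i.e., cycles representing $u_i$ relative to $U_i$, so that the cup product in relative cohomology genuinely represents $u_1 \cup \cdots \cup u_k$ after restriction. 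Neither step is deep, but both are the kind of detail the theorem promises to make explicit.
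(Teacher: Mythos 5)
Your proof is correct and follows essentially the same Schwarz--Farber argument as the paper: restrict zero-divisors to each $U_i$, lift to relative cohomology, and use the vanishing of $H^*(X\times Y, \bigcup U_i)$ together with the relative cup product. The only (minor) difference is that you obtain the vanishing of $j_i^*u$ directly from $j_i = \pi\circ s_i$, whereas the paper routes through the injectivity of $(\pi|_{\pi^{-1}(U_i)})^*$; this is a slight streamlining, not a different idea.
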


\begin{proof}
Assume $\TC(X,Y) \leq k$.  Then, take $\{U_j\}_{j=1}^k$ to be an open cover of $X\times Y$ such that for each $j$, there exists $s_j:U_j \to P_{X\times Y}$ with $s_j$ a section of $\pi$.  Then, for each $j$, we get the following commutative diagram:

{\centering
\begin{tikzcd}
\pi^{-1}(U_j) \ar[r,"a"] \ar[d,"\pi_j"] & P_{X\times Y} \ar[d,"\pi"] \\
U_j \ar[u,bend left,dashed, "s_j"] \ar[r,"b"] & X\times Y
\end{tikzcd}
\par}

This induces the following diagram in cohomology:

{\centering
\begin{tikzcd}
H^*(\pi^{-1}(U_j)) & \ar[l,"a^*"']  H^*(P_{X\times Y})  \\
H^*(U_j) \ar[u,"\pi_j^*"']& H^*(X\times Y) \ar[l,"b^*"'] \ar[u,"\pi^*"']
\end{tikzcd}
\par}

Since $\pi_j$ has a section, $\pi_j^*$ is injective.  So, if we take $\alpha \in Z(X\times Y)$, we have that $\pi^*(\alpha)=0 \implies a^*(\pi^*(\alpha))=0$.  By the diagram above, this implies that $\pi_j^*(b^*(\alpha))=0$, but $\pi_j^*$ in injective, so we see that $b^*(\alpha)=0$.  Thus, by exactness, $\alpha \in \operatorname{Im}(H^*(X\times Y, U_j) \to H^*(X\times Y))$.  We can then use this in the following diagram:

{\centering
\begin{tikzcd}
H^*(X\times Y\, , U_1) \otimes \cdots \otimes H^*(X\times Y\, , U_k) \ar[r] \ar[d] & H^*(X\times Y, \bigcup\limits_{j=1}^k U_j) = 0 \ar[d]\\
H^*(X\times Y) \otimes \cdots \otimes H^*(X\times Y) \ar[r,"\Delta^*"] & H^*(X\times Y)
\end{tikzcd}
\par}

Following the diagram, if $\alpha_1 \otimes \cdots \otimes \alpha_k \in H^*(X\times Y) \otimes \cdots \otimes H^*(X\times Y)$ is such that $\alpha_j \in Z(X\times Y)$, then $\alpha_1 \otimes \cdots \otimes \alpha_k$ pulls back to $\widetilde{\alpha}_1 \otimes \cdots \otimes \widetilde{\alpha}_k \in H^*(X\times Y\, , U_1) \otimes \cdots \otimes H^*(X\times Y\, , U_k)$.  By commutativity, we get that $\Delta^*(\alpha_1 \otimes \cdots \otimes \alpha_k) = 0$.

Thus, by contrapositive, if we have elements $\alpha_1, \cdots, \alpha_k \in Z(X\times Y)$ such that $\Delta^*(\alpha_1 \otimes \cdots \otimes \alpha_k) \neq 0$, we must have that $\TC(X,Y) >k$.

\end{proof}

The primary takeaway of the above result is that we can compute cohomological lower bounds using knowledge of the cup product in $Y$ and knowledge of the inclusion-induced map $\iota^*:H^*(X) \to H^*(Y)$.  Symplectic structures often exhibit useful knowledge of the inclusion-induced map in a powerful way.  Inspired by \cite[Thm 1]{farberRP}, we get the following theorem.

\begin{thm}\label{sympTC}
Let $(X,\omega_X)$ be a simply-connected, closed, symplectic manifold of dimension $2n$ with submanifold $Y$ of dimension $2m$ carrying a symplectic form $\omega_Y$ such that $\iota^*([\omega_X])=[\omega_Y]$.  Then $\TC(X,Y) = n+m+ 1$.
\end{thm}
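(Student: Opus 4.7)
The strategy is the standard symplectic sandwich: the upper bound comes from the Schwarz dimensional estimate, and the matching lower bound from a single zero-divisor built out of the two symplectic classes. Throughout we use real (or rational) cohomology so that Künneth applies.

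For the upper bound I would invoke Corollary \ref{schwarzubcor}(1). Since $X$ is simply-connected we have $\pi_j(X)=0$ for $j \leq 1$, so we may take $s=1$, giving
\[
\TC(X,Y) < \frac{\dim(X)+\dim(Y)+1}{s+1}+1 = \frac{2n+2m+1}{2}+1 = n+m+\tfrac{3}{2}.
\]
Since $\TC(X,Y)$ is an integer, this forces $\TC(X,Y) \leq n+m+1$.

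For the lower bound I would apply Theorem \ref{cohomlb} to a single cleverly chosen zero-divisor. Consider
\[
\alpha \;=\; [\omega_X]\otimes 1 \;-\; 1\otimes[\omega_Y] \;\in\; H^2(X\times Y).
\]
Using the Künneth description of $Z(X\times Y)$ from the remark preceding Theorem \ref{cohomlb}, the hypothesis $\iota^*[\omega_X]=[\omega_Y]$ gives $(\iota^*\otimes \mathrm{id}^*)(\alpha)=[\omega_Y]-[\omega_Y]=0$, so $\alpha$ is a zero-divisor. Next I would compute $\alpha^{n+m}$ by the binomial theorem,
\[
\alpha^{n+m} \;=\; \sum_{k=0}^{n+m} \binom{n+m}{k}(-1)^k\,[\omega_X]^{n+m-k}\otimes[\omega_Y]^k.
\]
The key observation is that $[\omega_X]^j=0$ for $j>n$ (since $X$ has real dimension $2n$) and $[\omega_Y]^j=0$ for $j>m$, so the only surviving term is $k=m$:
\[
\alpha^{n+m} \;=\; (-1)^m\binom{n+m}{m}\,[\omega_X]^n\otimes[\omega_Y]^m.
\]
Because $X$ and $Y$ are closed symplectic manifolds, $[\omega_X]^n$ and $[\omega_Y]^m$ are top-degree volume forms and hence nonzero, so by Künneth the tensor product is nonzero. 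Since the characteristic of the coefficient field is zero, the binomial coefficient is a unit, so $\alpha^{n+m}\neq 0$. Theorem \ref{cohomlb} then yields $\TC(X,Y) > n+m$, i.e.\ $\TC(X,Y)\geq n+m+1$.

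The main obstacle is really just locating the right zero-divisor; once $\alpha$ is written down the whole argument hinges on the dimensional cancellation that forces only the $k=m$ term of the binomial expansion to survive. This mirrors Farber's symplectic lower bound for $\TC(X)$ in \cite{farberRP}, and the hypothesis $\iota^*[\omega_X]=[\omega_Y]$ is precisely what is needed to guarantee that a single degree-two class, rather than a product of several zero-divisors, already achieves the sharp bound.
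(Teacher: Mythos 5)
Your proposal is correct and follows essentially the same route as the paper: the upper bound via Corollary \ref{schwarzubcor}(1) with $s=1$, and the lower bound by expanding $([\omega_X]\otimes 1 - 1\otimes[\omega_Y])^{n+m}$ and invoking Theorem \ref{cohomlb}. You supply a bit more detail than the paper does (explicitly isolating the surviving $k=m$ term and noting the need for characteristic-zero coefficients), but the argument is the same.
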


\begin{proof}
First, note that $\TC(X,Y) \leq n+m+1$ via Corollary \ref{schwarzubcor}, since $X$ is simply-connected.

For the lower bound, consider the cohomology classes $[\omega_X]$ and $[\omega_Y]$.  Since $\iota^*[\omega_X] = [\omega_Y]$, $[\omega_X]\otimes 1 - 1 \otimes [\omega_Y]$ is a zero-divisor in $Z(X \times Y)$.  Expanding via the binomial theorem, $([\omega_X] \otimes 1 - 1 \otimes [\omega_Y])^{n+m} = (-1)^m\binom{n+m}{m}[\omega_X]^n \otimes [\omega_Y]^m \neq 0$.  Thus, $\TC(X,Y) > n+m$ by Theorem \ref{cohomlb}.  The result follows.

\end{proof}

As an example, notice that $\CP^n$ is a closed, simply-connected, symplectic manifold and also that $\CP^m \subset \CP^n$ is a symplectic submanifold where the natural inclusion map satisfies that $\iota^*([\omega_{\CP^n}]) = [\omega_{\CP^m}]$.  Thus, we get an easy corollary.

\begin{cor}\label{CPn}
$\TC(\CP^n,\CP^m) = n+m+1$.
\end{cor}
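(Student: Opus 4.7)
The plan is to verify that the pair $(\CP^n,\CP^m)$ satisfies every hypothesis of Theorem \ref{sympTC} and then invoke it directly. Concretely, I would proceed by checking the symplectic, dimensional, and cohomological conditions in turn, each of which is standard for complex projective space.

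First, I would recall that $\CP^n$ is a closed, simply-connected manifold of real dimension $2n$ (its CW structure has a single cell in each even dimension up through $2n$, so $\pi_1(\CP^n)=0$), and that the Fubini–Study form $\omega_{\CP^n}$ endows it with a symplectic structure whose cohomology class $[\omega_{\CP^n}]$ generates $H^2(\CP^n;\R)\cong \R$. Likewise, $\CP^m$ is closed, simply-connected, of dimension $2m$, and the Fubini–Study form $\omega_{\CP^m}$ generates $H^2(\CP^m;\R)$. Under the standard linear embedding $\iota:\CP^m\hookrightarrow \CP^n$, the submanifold $\CP^m$ is complex (and hence symplectic), and $\iota^*\omega_{\CP^n}=\omega_{\CP^m}$ holds at the level of differential forms because the Fubini–Study forms are defined by the same local formula in compatible homogeneous coordinates. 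In particular $\iota^*[\omega_{\CP^n}]=[\omega_{\CP^m}]$ in $H^2(\CP^m;\R)$.

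With all hypotheses of Theorem \ref{sympTC} in place (taking $X=\CP^n$, $Y=\CP^m$, so that $n$ and $m$ in the theorem correspond to $n$ and $m$ here), the conclusion
\[
\TC(\CP^n,\CP^m) \;=\; n+m+1
\]
follows immediately. There is essentially no obstacle: the only step that requires any care is the compatibility $\iota^*[\omega_{\CP^n}]=[\omega_{\CP^m}]$, and this is a routine consequence of the fact that $\iota^*:H^2(\CP^n;\R)\to H^2(\CP^m;\R)$ is an isomorphism sending generator to generator (as one sees from the cellular inclusion, or directly from the definition of the Fubini–Study form).
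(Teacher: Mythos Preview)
Your proposal is correct and follows essentially the same approach as the paper: the paper simply observes (in the sentence preceding the corollary) that $\CP^n$ is closed, simply-connected, and symplectic, that $\CP^m$ is a symplectic submanifold with $\iota^*[\omega_{\CP^n}]=[\omega_{\CP^m}]$, and then invokes Theorem~\ref{sympTC}. Your write-up merely supplies a bit more detail (the Fubini--Study form and the cellular structure), but the argument is the same.
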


This corollary generalizes the result from \cite[Cor 2]{farberRP} that $\TC(\CP^n) = 2n+1$.  In general, projective spaces provide examples where the inclusion-induced map is well-behaved in cohomology.  We will return to the case of real projective spaces in Section 4.

\section{Relationship with Other Invariants}

The other two invariants we consider here are $\TC(X)$ and the Lusternik-Schnirelmann category (L-S cat) of $X$, denoted $\cat(X)$.  We defined $\TC(X)$ earlier, but we can use the Schwarz genus to give a definition for $\cat(X)$ that works well for our purposes.

For a pointed space $(X,x_0)$, there is a natural fibration $p_0:P_0X \to X\times \{x_0\}$ where $P_0X$ is the space $\{\sigma \in PX \, | \, \sigma(1)=x_0\}$.  Then $p_0(\sigma)=(\sigma(0),x_0)$ defines the fibration.  Notice that this is again the pullback of the fibration we used to define $\TC(X)$ over the inclusion map $X \times \{x_0\} \into X\times X$.

\begin{defn}
Let $p_0:P_0X\to X\times \{x_0\}$ be the fibration defined above.  Then, the \textit{Lusternik-Schnirelmann category of $X$} is the Schwarz genus of $p_0$, denoted by $\cat_{x_0}(X)$ or $\cat(X)$ if the basepoint is implied.
\end{defn}

\begin{remark}
The usual definition of $\cat(X)$ is the smallest number of sets $U_i \subseteq X$ needed to cover $X$ where $U_i$ is contractible in $X$.  In \cite[Thm 18]{schwarz}, Schwarz proves that when $p:E\to B$ is a fibration with $E$ contractible, then $\genus(p)=\cat(B)$.  In the above fibration, $P_0X$ is contractible, so this definition corresponds to the usual definition of L-S cat.  We will need a quick lemma to show that our definition is also independent of the choice of basepoint under reasonable conditions.
\end{remark}

\begin{lem}\label{catequiv}
If $X$ is path-connected, then for any $x_0, y_0 \in X$, $\cat_{x_0}(X)=\cat_{y_0}(X)$.
\end{lem}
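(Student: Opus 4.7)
The plan is to use path-connectedness to produce a homeomorphism-like correspondence between the path spaces $P_{x_0}X := \{\sigma \in PX \mid \sigma(1) = x_0\}$ and $P_{y_0}X := \{\sigma \in PX \mid \sigma(1) = y_0\}$ that intertwines the two endpoint-projection fibrations, which will force their Schwarz genera to coincide.

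First I would pick a path $\alpha : [0,1] \to X$ with $\alpha(0) = x_0$ and $\alpha(1) = y_0$, available by path-connectedness. Concatenation with $\alpha$ gives a continuous map $\Phi_\alpha : P_{x_0}X \to P_{y_0}X$ sending $\sigma \mapsto \sigma \cdot \alpha$ (traverse $\sigma$ then $\alpha$), and concatenation with the reverse path $\bar\alpha$ gives $\Phi_{\bar\alpha} : P_{y_0}X \to P_{x_0}X$. These maps do not invert each other on the nose, but the compositions $\Phi_{\bar\alpha} \circ \Phi_\alpha$ and $\Phi_\alpha \circ \Phi_{\bar\alpha}$ are fiber-homotopic to the identity over $X$, since both fibrations project by $\sigma \mapsto \sigma(0)$ and concatenation with $\alpha\cdot\bar\alpha$ is homotopic (rel endpoints at $\sigma(0)$) to the constant loop by a standard reparametrization.

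With this in hand, the key step is: given a Schwarz cover $\{U_i\}_{i=1}^k$ of $X \times \{x_0\} \cong X$ with sections $s_i : U_i \to P_{x_0}X$ of $p_{x_0}$, the maps $s_i' := \Phi_\alpha \circ s_i$ are sections of $p_{y_0}$ over the same sets $U_i$ (the endpoint at $0$ is untouched by concatenating $\alpha$ at the end). Hence $\cat_{y_0}(X) \leq \cat_{x_0}(X)$, and the symmetric construction using $\bar\alpha$ gives $\cat_{x_0}(X) \leq \cat_{y_0}(X)$, completing the proof.

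The main obstacle is purely technical: making sure the concatenation $\Phi_\alpha$ really produces a continuous section into $P_{y_0}X$, which amounts to the usual care with the compact-open topology on path spaces (continuity of concatenation, and the fact that the starting-point map is preserved). Once this bookkeeping is accepted, the argument is essentially a two-line symmetry.
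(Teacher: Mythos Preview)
Your proof is correct and follows essentially the same approach as the paper: choose a path between the two basepoints, and use concatenation with it to turn sections of one fibration into sections of the other over the same open cover, then appeal to symmetry. The paper omits your discussion of $\Phi_\alpha$ and $\Phi_{\bar\alpha}$ being fiber-homotopy inverses (which is unnecessary for the argument, since you only need that $\Phi_\alpha \circ s_i$ is a section, not that $\Phi_\alpha$ is invertible in any sense), but the core step is identical.
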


\begin{proof}
It suffices to show that $\cat_{x_0}(X)\leq \cat_{y_0}(X)$ by symmetry.

Suppose $\cat_{y_0}(X)=k$.  Then there is an open cover of $X\times \{y_0\}$, say $\{U_i\}_{i=1}^k$ with sections $s_i:U_i \to P_0$.  To construct an open cover of $X\times \{x_0\}$, let $V_i=\{(u,x_0)\, | \, (u,y_0) \in U_i\}$.  Then, since $X$ is path-connected, there exists some path $\sigma$ such that $\sigma(0)=y_0$, and $\sigma(1)=x_0$.  The map $s_i':V_i \to P_0$ given by $s_i'(u,x_0)=s_i(u,y_0)\cdot \sigma$, where $\cdot$ denotes concatenation, is a continuous section of the fibration for $\cat_{x_0}(X)$.  Thus, $\cat_{x_0}(X)\leq k$.
\end{proof}

For a pointed space $(X,x_0)$, there is a natural inclusion $f:X\times \{x_0\} \into X\times Y$ when $x_0\in Y$.  Moreover, we have the following relationship:

\begin{prop}\label{cat<TC}
Assume $Y$ is a non-empty subset of a path-connected pointed space $(X,x_0)$.  Then we have $$\cat(X) \leq \TC(X,Y) \leq \TC(X) \leq \cat(X\times X).$$
\end{prop}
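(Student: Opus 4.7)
The plan is to prove the three inequalities separately. The middle one, $\TC(X,Y) \leq \TC(X)$, is exactly Corollary \ref{RelTC<TC}, so the substantive work lies in the outer two inequalities.

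First I would handle $\cat(X) \leq \TC(X,Y)$. Since $Y$ is non-empty, pick any $y_0 \in Y$ and use it as a basepoint for L-S category; by Lemma \ref{catequiv} this is harmless, since $X$ is path-connected, so $\cat(X) = \cat_{y_0}(X)$. The idea is then to realize the defining fibration of $\cat_{y_0}(X)$ as a pullback of $\pi:P_{X \times Y} \to X \times Y$. Specifically, under the inclusion $j:X \times \{y_0\} \hookrightarrow X \times Y$, the pullback fibration $j^*\pi$ has total space $\{\sigma \in PX : \sigma(1) = y_0\} = P_0 X$ with projection $\sigma \mapsto (\sigma(0), y_0)$, which is precisely the fibration $p_0$ defining $\cat_{y_0}(X)$. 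Applying the already-quoted Schwarz pullback inequality then gives $\cat_{y_0}(X) = \genus(j^*\pi) \leq \genus(\pi) = \TC(X,Y)$.

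Next I would prove $\TC(X) \leq \cat(X \times X)$ via the classical argument that turns a categorical open cover of $X \times X$ into an open cover supporting sections of the topological-complexity fibration $p:PX \to X \times X$. Suppose $\{U_i\}_{i=1}^k$ is such a cover with each $U_i$ contractible in $X \times X$ via a homotopy $H_i = (H_i^1, H_i^2):U_i \times I \to X \times X$ running from the inclusion to the constant map at $(x_0,x_0)$. For each $(x,y) \in U_i$, the path $t \mapsto H_i^1(x,y,t)$ runs from $x$ to $x_0$ and $t \mapsto H_i^2(x,y,1-t)$ runs from $x_0$ to $y$; concatenating these produces a continuous section $s_i:U_i \to PX$ of $p$. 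Hence $\TC(X) \leq k = \cat(X \times X)$.

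The main subtlety is the careful identification of the pullback fibration in the first inequality: one needs to unpack how restricting the target of $\pi$ to $\{y_0\}$ corresponds to requiring paths to terminate at $y_0$, so that $j^*\pi$ really is $p_0$ on the nose rather than merely equivalent to it. Everything else is essentially bookkeeping; the third inequality is a standard concatenation trick, and the second is free from Corollary \ref{RelTC<TC}.
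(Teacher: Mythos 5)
Your proof is correct and follows essentially the same route as the paper: the middle inequality is Corollary \ref{RelTC<TC}, the outer-left inequality reduces to viewing the $\cat$ fibration as the restriction of $\pi$ over $X \times \{y_0\}$ after invoking Lemma \ref{catequiv}, and the outer-right inequality is the standard concatenation argument. The only cosmetic differences are that you invoke the already-quoted Schwarz pullback proposition abstractly where the paper unwinds it by explicitly restricting the open cover and sections, and you spell out the proof of $\TC(X) \leq \cat(X \times X)$ where the paper simply cites Farber.
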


\begin{proof}

We saw in Corollary \ref{RelTC<TC} that $\TC(X,Y) \leq \TC(X)$, and in \cite[Prop 4.19]{farber}, Farber proves that $\TC(X) \leq \cat(X\times X)$, so all that is left for us is to show that $\cat(X) \leq \TC(X,Y)$.

By Lemma \ref{catequiv}, $\cat_{x_0}(X) = \cat_y(X)$ for some $y\in Y$ since $X$ is path-connected.  Suppose $\TC(X,Y)=k$, and that the sets $\{U_i\}_{i=1}^k$ exhibit this fact.  Then, define $V_i = U_i \cap (X \times \{y\})$.  Note that $\{V_i\}_{i=1}^k$ forms an open cover of $X \times \{y\}$ with sections $s_i|_{V_i}:V_i \to P_{X \times \{y\}}=P_0$.  Thus, $\cat(X)\leq k$.

\end{proof}

One comment on this result is that this yields an interpretation of the relative topological complexity of the pair $(X,Y)$ as a means of interpolating between TC and L-S cat.  In fact, if $Y \subset Z$, it is easy to see that $\TC(X,Y) \leq \TC(X,Z)$.

However, notice that it is not necessarily true that $\TC(Y) \leq \TC(X,Y)$.  This is because the pullback of the inclusion map $Y\times Y \into X\times Y$ has a total space of paths \emph{in X} between points in $Y$.  We can exhibit this fact computationally in the following example.

\begin{example}
Recall that $\RP^2$ embeds in $\R^4$, so we have an embedding of $\RP^2$ into $S^4$ by taking the one-point compactification of $\R^4$.  We know that $\TC(S^4,\RP^2) \leq \TC(S^4)=3$, but $\TC(\RP^2) = 4$.  Thus, $\TC(\RP^2) > \TC(S^4,\RP^2)$.
\end{example}

One nice application of this relationship occurs in the presence of a topological group.  Let $G$ be a path connected topological group.  As a simple exercise succeeding \cite[Prop 4.19]{farber}, Farber indicates that $\TC(G)=\cat(G)$.  Using this, we have the following corollary:

\begin{cor}\label{toplgrp}
Let $H$ be a non-empty subset of a path-connected topological group $G$.  Then $\TC(G,H) = \cat(G)$.
\end{cor}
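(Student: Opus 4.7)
The plan is to simply chain together the inequality in Proposition \ref{cat<TC} with Farber's observation that $\TC(G)=\cat(G)$ for a path-connected topological group $G$. Since $H$ is a non-empty subset of the path-connected space $G$, Proposition \ref{cat<TC} applies directly and yields
\[
\cat(G) \;\leq\; \TC(G,H) \;\leq\; \TC(G).
\]
Invoking Farber's exercise following \cite[Prop 4.19]{farber}, the right-hand side collapses to $\cat(G)$, forcing $\TC(G,H)=\cat(G)$.

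The only subtlety worth flagging is ensuring the hypotheses of Proposition \ref{cat<TC} are met: one needs a basepoint of $G$ lying in $H$, but since $H$ is non-empty we may simply take any $h_0 \in H$ as the basepoint. Path-connectedness of $G$ is already assumed. No additional structure of the group (such as the shear isomorphism used in Farber's argument for $\TC(G)=\cat(G)$) is required in our proof beyond the cited fact.

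I expect no real obstacles here; the corollary is a two-line deduction given the prior results. The only thing to be careful about is not overclaiming — in particular, the hypothesis that $H$ be a subgroup, or that $H$ be closed, is \emph{not} needed; the statement works for any non-empty subset precisely because the upper bound $\TC(G,H) \leq \TC(G)$ from Corollary \ref{RelTC<TC} requires nothing of $H$ beyond being a subset.
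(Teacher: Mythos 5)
Your proof is correct and matches the paper's argument exactly: both chain $\cat(G) \leq \TC(G,H) \leq \TC(G) = \cat(G)$ using Proposition \ref{cat<TC} and Farber's observation. Your added remarks about choosing a basepoint in $H$ and not needing $H$ to be a subgroup are accurate clarifications, not deviations.
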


\begin{proof}
$\cat(G) \leq \TC(G,H) \leq \TC(G) = \cat(G)$
\end{proof}

Since any torus $T^n=(S^1)^n$ has a topological group structure, and $\TC(T^n) = n+1$ is a well-known result, we can compute their relative topological complexity as a corollary to this:

\begin{cor}\label{torus}
Let $H\subseteq T^n$.  Then $\TC(T^n,H) = n+1$.  In particular, $\TC(T^n, T^m)=n+1$ when $n\geq m$.
\end{cor}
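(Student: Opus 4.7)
The plan is to exhibit this as a direct application of Corollary \ref{toplgrp}, with only the verification that $T^n$ satisfies its hypotheses and the evaluation of $\cat(T^n)$ remaining.

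First I would observe that $T^n = (S^1)^n$ is a path-connected topological group (being a product of path-connected topological groups), so the hypothesis of Corollary \ref{toplgrp} is met whenever $H \subseteq T^n$ is non-empty. Applying that corollary immediately gives $\TC(T^n, H) = \cat(T^n)$, reducing the problem to the computation of $\cat(T^n)$.

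Next I would evaluate $\cat(T^n) = n+1$. The cleanest route, staying entirely within the language of this paper, is to invoke Farber's remark (already cited just before Corollary \ref{toplgrp}) that $\TC(G) = \cat(G)$ for a path-connected topological group $G$. Applying this with $G = T^n$ and combining with the well-known evaluation $\TC(T^n) = n+1$ yields $\cat(T^n) = n+1$. Substituting into the equation of the previous paragraph gives $\TC(T^n, H) = n+1$.

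For the ``in particular'' clause, I would check that when $n \geq m$ the standard torus $T^m$ sits inside $T^n$ as a non-empty subset; this is clear by identifying $T^m$ with the subgroup $T^m \times \{(1,\dots,1)\} \subseteq T^n$ (or any other embedding as a subset suffices, since only non-emptiness is needed). The general formula then specializes to $\TC(T^n, T^m) = n+1$.

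There is no real obstacle here; the entire content lies in Corollary \ref{toplgrp} and the known values $\TC(T^n) = \cat(T^n) = n+1$, so the proof should be a two-line deduction citing those facts.
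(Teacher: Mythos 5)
Your proof is correct and matches the paper's (implicit) reasoning exactly: the paper presents Corollary \ref{torus} as an immediate consequence of Corollary \ref{toplgrp} together with the facts that $T^n$ is a path-connected topological group and $\TC(T^n) = \cat(T^n) = n+1$, which is precisely the chain of deductions you spell out. Your note that $H$ must be non-empty to invoke Corollary \ref{toplgrp} is a small but legitimate hygiene point that the paper's statement elides.
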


A standard result for both L-S cat and TC says that $\TC(X)=1$ if and only if $X$ is contractible (similarly $\cat(X)=1$ if and only if $X$ is contractible).  We now establish a similar result for relative topological complexity of the pair $(X,Y)$.

\begin{prop}\label{Xcontract}
$\TC(X,Y) = 1$ if and only if $X$ is contractible.
\end{prop}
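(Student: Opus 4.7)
The plan is to prove both implications directly using the definition of $\TC(X,Y)$ as $\genus(\pi)$, exploiting the fact that $\TC(X,Y)=1$ is equivalent to the existence of a single global section of $\pi:P_{X\times Y}\to X\times Y$.

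For the easier direction, I would assume $X$ is contractible and produce a global section. Pick a contraction $H:X\times I\to X$ with $H(x,0)=x$ and $H(x,1)=x_0$ for some fixed $x_0\in X$. Define $s:X\times Y\to P_{X\times Y}$ by sending $(x,y)$ to the concatenation of the path $t\mapsto H(x,2t)$ (from $x$ to $x_0$) with the path $t\mapsto H(y,2-2t)$ (from $x_0$ to $y$). This is continuous in $(x,y)$ and lands in $P_{X\times Y}$ since the endpoint at $t=1$ is $y\in Y$. The open cover $\{X\times Y\}$ together with $s$ shows $\TC(X,Y)\leq 1$; since $Y$ is assumed non-empty, $\TC(X,Y)\geq 1$, giving equality.

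For the converse, I would reverse the construction. If $\TC(X,Y)=1$, pick a global section $s:X\times Y\to P_{X\times Y}$ of $\pi$. Fix any $y_0\in Y\subseteq X$ and define $H:X\times I\to X$ by $H(x,t)=s(x,y_0)(t)$. By the fibration property, $H(x,0)=x$ and $H(x,1)=y_0$, and continuity of $H$ follows from continuity of $s$ together with continuity of the evaluation map $P_{X\times Y}\times I\to X$. Thus $H$ is a contraction of $X$ to the point $y_0$, showing $X$ is contractible.

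The argument is essentially symmetric in spirit to the classical proofs that $\TC(X)=1$ or $\cat(X)=1$ characterize contractibility, and no serious obstacle arises since $Y$ is assumed non-empty so that $y_0$ is available. The only subtle point worth mentioning is that $y_0$ must be chosen in $Y$, not just in $X$, for the section to be defined at $(x,y_0)$; this is exactly where the non-emptiness of $Y$ enters.
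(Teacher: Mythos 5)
Your proof is correct, but it takes a genuinely different route from the paper. The paper's proof is a two-line consequence of the sandwich inequality $\cat(X) \leq \TC(X,Y) \leq \TC(X)$ from Proposition~\ref{cat<TC}, combined with the classical facts that $\cat(X)=1$ and $\TC(X)=1$ each characterize contractibility: if $\TC(X,Y)=1$ then $\cat(X)=1$, and if $X$ is contractible then $\TC(X)=1$. By contrast, you argue directly from the definition of $\TC(X,Y)$ as a Schwarz genus, explicitly constructing a global section of $\pi$ from a contraction of $X$, and conversely extracting a contraction of $X$ to a point $y_0 \in Y$ from a global section. Your approach is more self-contained (it does not invoke Proposition~\ref{cat<TC} or the classical characterizations) and arguably cleaner on hypotheses: the paper's route implicitly inherits the path-connectedness assumption from Proposition~\ref{cat<TC}, whereas your forward direction derives path-connectedness of $X$ for free from the existence of the global section, needing only that $Y$ is non-empty (which you correctly flag as the essential hypothesis). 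The trade-off is that the paper's proof is shorter once the surrounding machinery is in place, while yours would stand alone even without Proposition~\ref{cat<TC} having been proved first.
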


\begin{proof}
We will prove both implications separately although the proofs are similar.

\begin{itemize}
\item [$\implies$:] Suppose $\TC(X,Y) = 1$.  Then $1 \leq \cat(X) \leq \TC(X,Y) =1$, so $\cat(X)=1$.  But $\cat(X)=1$ if and only if $X$ is contractible.

\item [$\impliedby$:] Suppose  $X$ is contractible.  Then $\TC(X)=1$.  Then $1\leq \TC(X,Y) \leq \TC(X)=1$, so $\TC(X,Y) =1$.
\end{itemize}
\end{proof}

It is also useful to think of what the contractibility of $Y$ can yield in terms of $\TC(X,Y)$ results.  This yields the following definition and theorem.

\begin{defn}
We say that a space $Y$ is \textit{contractible in $X$} if the inclusion map $\iota:Y\to X$ is homotopic to a constant map.
\end{defn}

\begin{thm}\label{Ycontract}
If $Y$ is contractible in $X$, then $\TC(X,Y) = \cat(X)$.
\end{thm}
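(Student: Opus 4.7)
The plan is to pair Proposition \ref{cat<TC}, which already gives $\cat(X) \leq \TC(X,Y)$, with the reverse inequality $\TC(X,Y) \leq \cat(X)$ constructed directly from a null-homotopy of the inclusion. The contractibility of $Y$ in $X$ provides a homotopy $H: Y \times I \to X$ with $H(y,0) = y$ and $H(y,1) = x_0$ for some fixed $x_0 \in X$, and the idea is to route every motion planning path from $x$ to $y$ through this basepoint, using motion plans for $\cat(X)$ on the first leg and the reversed homotopy track on the second.

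More concretely, suppose $\cat_{x_0}(X) = k$, and let $\{U_i\}_{i=1}^k$ be an open cover of $X \times \{x_0\}$ together with sections $s_i: U_i \to P_0 X$, so that $s_i(x, x_0)$ is a path in $X$ from $x$ to $x_0$. Identify $U_i$ with its projection $U_i' \subseteq X$ and set $V_i = U_i' \times Y \subseteq X \times Y$; these are open and cover $X \times Y$ since the $U_i'$ cover $X$. Define $\sigma_i: V_i \to P_{X \times Y}$ by sending $(x,y)$ to the concatenation of $s_i(x, x_0)$ with the reversed homotopy track $t \mapsto H(y, 1-t)$. This is a continuous path in $X$ starting at $x$ and ending at $H(y,0) = y \in Y$, so $\sigma_i$ is a section of $\pi$. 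Continuity of $\sigma_i$ follows from continuity of $s_i$ and $H$ together with continuity of concatenation. Hence $\TC(X,Y) \leq k = \cat(X)$, which combined with Proposition \ref{cat<TC} yields the desired equality.

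There is no substantial obstacle; the construction is a gluing of two one-parameter families of paths. The one point worth flagging is that the argument uses $\cat_{x_0}(X) = \cat(X)$, which is justified by Lemma \ref{catequiv} provided $X$ is path-connected, a hypothesis implicit throughout this section. The proof also illustrates why the statement is natural: contractibility of $Y$ in $X$ collapses the target end of the motion planning fibration to a single point up to homotopy, reducing the pair problem to a category problem on $X$.
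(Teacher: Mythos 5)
Your proof is correct and follows essentially the same approach as the paper: both decompose $\cat(X) = k$ into an open cover with sections, extend each $U_i$ to $V_i = p_X(U_i)\times Y$, and define sections by concatenating a $\cat(X)$ motion plan with the homotopy track coming from the null-homotopy of the inclusion (you merely orient the homotopy the other way and reverse it). Your remark about needing path-connectedness for $\cat_{x_0}(X) = \cat(X)$ is a reasonable point of care that the paper leaves implicit.
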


\begin{proof}
We know $\cat(X) \leq \TC(X,Y)$, so all that remains is to see that $\cat(X) \geq \TC(X,Y)$ when $Y$ is contractible in $X$.

Suppose $\cat(X) = k$ and this is exhibited by an open cover $U_1, \dots, U_k$ of $X\times \{x_0\}$ with sections $s_i$ over each $U_i$.  Let $p_X(U_i)$ be the projection of $U_i$ onto its $X$ component.  Define $V_i = p_X(U_i)\times Y$.  Then, since $p_X(U_i)$ covers $X$, the collection of $V_i$ sets covers $X\times Y$.  Let $H:Y\times I  \to X$ be the homotopy where $H(Y,0)=x_0$ and $H(Y,1) = \iota(Y)$, and let $h(y)=H|_{\{y\}\times I}$ be the path from $x_0$ to $\iota(y)$ for $y\in Y$.  Define $s'_i(x,y) = s_i(x,x_0) \cdot h(y) $.  The pairs $(V_i,s'_i)$ form an open cover with sections for $\TC(X,Y)$ with $k$ elements.  Thus, $\cat(X) \geq \TC(X,Y)$.
\end{proof}

\subsection{Examples Involving Spheres}
Since $\pi_m(S^n)=0$ for $m<n$, the relative topological complexity of pairs of spheres is a simple corollary to Theorem \ref{Ycontract}.

\begin{cor}\label{sphere}
Take $n>m>0$, then $\TC(S^n,S^m) =\cat(S^n)=2$.
\end{cor}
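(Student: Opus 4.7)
The plan is to apply Theorem \ref{Ycontract} directly, so the task reduces to checking its hypothesis: that $S^m$ is contractible in $S^n$ when $n > m$. Once this is established, the theorem gives $\TC(S^n, S^m) = \cat(S^n)$, and then I would invoke the well-known fact $\cat(S^n) = 2$ (which is immediate from covering $S^n$ by two contractible open hemispheres, say complements of antipodal points) to finish.

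To verify the hypothesis, I would note that ``contractible in $X$'' means the inclusion $\iota: S^m \to S^n$ is null-homotopic. Since the set of homotopy classes of based maps $S^m \to S^n$ is $\pi_m(S^n)$, and $\pi_m(S^n) = 0$ for $m < n$, the inclusion is based-null-homotopic; a standard argument converts this into a free null-homotopy (or one can work unbased from the start, since $S^n$ is simply connected for $n \geq 2$ and the $n=1$, $m=0$ case can be handled separately by inspection). This is exactly the hypothesis of Theorem \ref{Ycontract}.

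The argument really has no substantial obstacle; all the work is packaged into the earlier theorem and into the classical computation of $\pi_m(S^n)$ in the stable-range vanishing and of $\cat(S^n)$. If I wanted to avoid invoking $\cat(S^n) = 2$ as a black box, I would give the two-set open cover explicitly and note a section of the based path fibration over each hemisphere via straight-line homotopies in a coordinate chart; but this is routine and would not belong in a short corollary.
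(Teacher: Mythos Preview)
Your proposal is correct and follows exactly the paper's approach: the paper states this as an immediate corollary of Theorem~\ref{Ycontract}, using precisely the observation that $\pi_m(S^n)=0$ for $m<n$ to verify that $S^m$ is contractible in $S^n$. Your aside about the $n=1$, $m=0$ case is unnecessary since the hypothesis $n>m>0$ forces $n\geq 2$, but this does not affect the argument.
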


\begin{remark}
It is beneficial to see an explicit motion planning algorithm exhibiting the fact that $\TC(S^n,S^m)=2$.  We provide this here as an example of the construction used in Theorem \ref{Ycontract}.

First we construct the open sets needed to see that $\cat(S^n)=2$.  Take the distinguished point of $S^n$ to be $e_1 = (1,0,\dots , 0)$, let its antipode be $e_2 = (-1,0, \dots, 0)$, and let $0<\varepsilon <1$.  Take $\pi_1:S^n \to \R$ to be projection onto the first component.  We define the open cover of $S^n \times \{e_1\}$ by $U_1 = \pi_1^{-1}((-\varepsilon,1]) \times \{e_1\} $ and $U_2 = \pi_1^{-1}([-1,\varepsilon)) \times \{e_1\}$.

For $i=1,2$, let $f_i:U_i \into S^n$ be the natural inclusion map.  We can easily define homotopies $F_i:U_i \times I \to S^n$ such that $F_i((x,e_1),0)=f_i(x,e_1)$ and $F_i((x,e_1),1) = e_i$.  Fix a path $\sigma:I \to S^n$ with $\sigma(0) = e_2$ and $\sigma(1)=e_1$.  Then, we can define sections over each $U_i$ by $s_1(x,e_1) = F_1((x,e_1),-)$ and $s_2(x,e_1)=F_2((x,e_1),-) \cdot \sigma$.

To incorporate $S^m$, we proceed exactly as we did in the proof of Theorem \ref{Ycontract}.  Let $\iota:S^m \to S^n$ be the inclusion map, and WLOG choose $h:S^m \times I \to S^n$ to be a homotopy where $h(S^m,0) = e_1$ and $h(S^m,1) = \iota(S^m)$.  Then for each $p \in S^m$, take $h(p) = h|_{\{p\}\times I}$ to be the path from $e_1$ to $\iota(p)$.  Take $V_1 = \pi_1^{-1}((-\varepsilon,1]) \times S^m$ and $V_2 = \pi_1^{-1}([-1,\varepsilon)) \times S^m$ mimicking $U_1$ and $U_2$ above so that $U_i \subseteq V_i$.  Then, define $s'_i(x,p) = s_i(x,e_1) \cdot h(p)$ for each $i$.  This exhibits the rules for $\TC(S^n,S^m)$ explicitly.
\end{remark}

Note that this differs significantly from the $\TC(S^n)$ case where the parity of the sphere's dimension determines the value.

We finish this section by putting all of the tools we developed to use on tackling pairs of bouquets of spheres.

\begin{prop}\label{wedge}
Suppose $(a_i)_{i=1}^n$ is a sequence of positive integers.  Then,
\begin{enumerate}
\item $\TC\bigg( \bigvee\limits_{i=1}^n S^{a_i}, * \bigg) = 2$; and
\item For $1 < m < n$, $\TC\bigg( \bigvee\limits_{i=1}^n S^{a_i}, \bigvee\limits_{j=1}^m S^{a_j} \bigg) = 3$.
\end{enumerate}
\end{prop}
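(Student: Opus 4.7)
For part (1), observe that when $Y = \{*\}$ the defining fibration $\pi: P_{X \times \{*\}} \to X \times \{*\}$ coincides with the Lusternik--Schnirelmann fibration $p_0: P_0 X \to X \times \{x_0\}$, so directly from the definition $\TC(X, \{*\}) = \cat(X)$. Since each $S^{a_i}$ is a suspension and a wedge of suspensions is again a suspension, $\bigvee_{i=1}^n S^{a_i}$ is itself a suspension, hence has L--S category at most $2$; being non-contractible, its category equals $2$.

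For part (2), write $X = \bigvee_{i=1}^n S^{a_i}$ and $Y = \bigvee_{j=1}^m S^{a_j}$. For the upper bound, I chain known inequalities:
\[
\TC(X,Y) \leq \TC(X) \leq \cat(X \times X) \leq 2\cat(X) - 1 = 3,
\]
using Corollary \ref{RelTC<TC}, Proposition \ref{cat<TC}, the standard product inequality $\cat(A \times B) \leq \cat(A) + \cat(B) - 1$ for L--S category, and part (1).

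For the lower bound, I apply Theorem \ref{cohomlb} with $\mathbb{Z}/2$ coefficients. Let $\alpha_i \in H^{a_i}(X)$ and $\beta_j \in H^{a_j}(Y)$ be the natural generators; the inclusion $\iota:Y \hookrightarrow X$ satisfies $\iota^*(\alpha_j) = \beta_j$ for $j \leq m$ and $\iota^*(\alpha_i) = 0$ for $i > m$. Fix $i$ with $m < i \leq n$ (available because $m < n$) and any $j \leq m$, and set $u = \alpha_i \otimes 1$ and $v = \alpha_j \otimes 1 - 1 \otimes \beta_j$; both lie in $Z(X \times Y)$ by construction. The intra-$X$ term $(\alpha_i \cup \alpha_j) \otimes 1$ in the product $uv$ vanishes because a wedge of positive-dimensional spheres has trivial cup products in positive degrees, leaving $uv = \alpha_i \otimes \beta_j \neq 0$ as a basis element of $H^*(X) \otimes H^*(Y)$. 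Theorem \ref{cohomlb} then forces $\TC(X,Y) > 2$, and matching this with the upper bound yields $\TC(X,Y) = 3$.

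The main subtlety is the choice of zero-divisor pair in the lower bound. Two ``purely-$X$'' zero-divisors $\alpha_i \otimes 1$ and $\alpha_{i'} \otimes 1$ multiply to zero by the vanishing of positive-degree cup products in $H^*(X)$, so the strategy must exploit the mixed tensor structure: pairing an \emph{unmatched} generator $\alpha_i$ (which is why we need $m < n$) with a \emph{balanced} zero-divisor of the form $\alpha_j \otimes 1 - 1 \otimes \beta_j$ keeps the cross term $\alpha_i \otimes \beta_j$ alive in $H^*(X)\otimes H^*(Y)$.
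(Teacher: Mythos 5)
Your proof is correct, and the interesting divergence from the paper is in the upper bound for part (2). For part (1) both arguments arrive at $\TC(X,\{*\}) = \cat(X)$: you by directly identifying the pair fibration over $X\times\{*\}$ with the Lusternik--Schnirelmann fibration (together with Lemma \ref{catequiv} to handle an arbitrary choice of $*$), the paper by invoking Theorem \ref{Ycontract} since a point is contractible in $X$; the suspension argument for $\cat(\bigvee S^{a_i})=2$ is standard and fine. For the lower bound in (2), your zero-divisor pair $\alpha_i\otimes 1$ (with $m<i\le n$) and $\alpha_j\otimes 1 - 1\otimes\beta_j$ coincides with the paper's pair $g_{m+1}\otimes 1$ and $g_1\otimes 1 - 1\otimes g_1$, and the computation showing only the cross term $\alpha_i\otimes\beta_j$ survives is the same. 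The genuine difference is the upper bound: the paper builds an explicit three-set open cover of $X\times Y$ (punctured wedge times punctured wedge, plus two families of products with small neighborhoods of antipodal points) and assembles sections from contractions and fixed connecting paths, whereas you chain $\TC(X,Y)\le\TC(X)\le\cat(X\times X)\le 2\cat(X)-1=3$ via the Lusternik--Schnirelmann product inequality. Your route is shorter and purely formal, at the cost of importing the inequality $\cat(A\times B)\le\cat(A)+\cat(B)-1$ (standard for CW complexes but not established in the paper) and of yielding no explicit motion planner; the paper's route is longer but self-contained and constructive. Both are valid.
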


\begin{proof}
For (1), without loss of generality, suppose $\iota(*) = x_0$  where $x_0$ is the wedge point of $\bigvee^n_{i=1} S^{a_i}$.  Thus, by Theorem \ref{Ycontract}, $\TC\big( \bigvee^n_{i=1} S^{a_i},*\big) = \TC\big( \bigvee^n_{i=1} S^{a_i}, x_0 \big) = \cat\big( \bigvee^n_{i=1} S^{a_i} \big) = 2$.

For (2), let $x_i$ denote the point in $S^{a_i}$ antipodal to $x_0$.  Let $C_a = \bigvee^n_{i=1} S^{a_i} - \{ x_i \}_{i=1}^n$ and let $C_b=\bigvee^m_{j=1} S^{a_j}-\{x_j\}_{j=1}^m$.  Notice that $C_a$ is contractible and $C_b \subseteq C_a$, so there exists a homotopy $h:C_a\times I \to C_a$ such that $h(x,-):I \to C_a$ is a path from $x$ to $x_0$ for each $x \in C_a$.  This homotopy also assigns paths from points in $C_b$ to $x_0$.  Also, for each $1 \leq i \leq n$, take $D_i$ to denote a contractible neighborhood of $x_i$ such that $x_0 \notin D_i$ and with contraction $k_i:D_i \times I \to D_i$ such that $k_i(x,0)=x$ and $k_i(x,1) = x_i$.  Finally, fix paths $\sigma_i:I \to \bigvee_{i=1}^n S^{a_i}$ where $\sigma_i(0) = x_i$ and $\sigma_i(1)=x_0$ for each $1 \leq i \leq n$.

We can now construct a motion planning algorithm on $\bigvee^n_{i=1}S^{a_i} \times \bigvee^m_{j=1}S^{a_j}$.  Let $\ol{\sigma}$ denote the path $\sigma$ traversed backwards.  Define $U_1 = C_a \times C_b$, $U_2 = \bigcup_{i=1}^n D_i \times C_b \, \cup \, \bigcup_{j=1}^m C_a \times D_j$, and $U_3=\bigcup_{(i,j) \in [n]\times [m]} D_i \times D_j$.  Let $X = \bigvee_{i=1}^n S^{a_i} \times \bigvee_{j=1}^m S^{a_j}$ and let $P_X \to X$ denote the relative topological complexity fibration.  Define $s_1:U_1 \to P_X$ by $s_1(x,y)=h(x,-)\cdot \ol{h}(y,-)$.  Each of $U_2$ and $U_3$ is a topologically disjoint union of open sets in $X$.  Then, we need only define sections over each set in the union and appropriately combine them for sections over $U_2$ and $U_3$.  We break these into the following three cases:

\begin{itemize}
\item For $D_i \times C_b$, use $s(x,y) = k_i(x,-)\cdot \sigma_i \cdot \ol{h}(y,-)$.
\item For $C_a \times D_j$, use $s'(x,y) = h(x,-)\cdot \ol{\sigma}_j \cdot \ol{k}_j(y,-)$.
\item For $D_i \times D_j$, use $s''(x,y) = k_i(x,-) \cdot \sigma_i \cdot \ol{\sigma}_j \cdot \ol{k}_j(y,-)$.
\end{itemize}

For the lower bound in the case where $\{b_j\} \neq \emptyset$, we must locate two non-zero cohomology elements in $H^*\big(\bigvee_{i=1}^n S^{a_i} \big)$.  Let $\pi_k:\bigvee_{i=1}^n S^{a_i} \to S^{a_k}$ denote the map sending the index $k$ sphere to $S^{a_k}$ and all other spheres to $x_0$.  For $0<k<n$, $\pi_k^*:H^*(S^{a_k}) \to H^*\big( \bigvee_{i=1}^n S^{a_i} \big)$ maps the generator ($gen$) of $H^*(S^{a_k})$ to a unique non-zero element in $H^*\big( \bigvee_{i=1}^n S^{a_i} \big)$.  Let $g_1 = \pi_1^*(gen)$ and $g_{m+1} =\pi_{m+1}^*(gen)$.

Let $\iota: \bigvee_{j=1}^m S^{a_j}\to \bigvee_{i=1}^n S^{a_i}$ be the inclusion map.  Then $(\iota \otimes id)^*(g_{m+1}\otimes 1)=0$ in $H^* (\bigvee_{j=1}^m S^{a_j} \times \bigvee_{j=1}^m S^{a_j})$.  Also, using the notation from before, we get that $\Delta^*(g_1 \otimes 1 - 1 \otimes g_1) = 0$.  Multiplying these two zero divisors together yields:
$$ (g_{m+1} \otimes 1)(g_1 \otimes 1 - 1 \otimes g_1) = g_{m+1}g_1\otimes 1 -g_{m+1}\otimes g_1 =-g_{m+1} \otimes g_1 \neq 0$$

Along with the motion planning algorithm above, this yields the result.

\end{proof}

\section{Real Projective Spaces}

Unlike complex projective spaces, real projective spaces do not have as simple or straightforward of a relationship with topological complexity.  Farber, Tabachnikov, and Yuzvinsky demonstrate this in \cite{farberRP}.  One of the main results from that paper connects the topological complexity of $\RP^n$ to the immersion dimension of $\RP^n$.  In particular, they show that:

\begin{thm}[\cite{farberRP}]\label{farberimm}
If $n \neq 1,3,7$, then $\TC(\RP^n)=Imm(\RP^n)+1$
\end{thm}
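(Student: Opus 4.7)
The plan is to factor the equality $\TC(\RP^n) = \mathrm{Imm}(\RP^n)+1$ through the classical correspondence between axial maps and immersions of real projective spaces. The first step is to identify the Schwarz genus of the $\TC$ path fibration $p : P\RP^n \to \RP^n \times \RP^n$ with a geometric invariant built from axial maps. Concretely, I would show that $\TC(\RP^n) \leq k$ if and only if there exists an axial map $\mu : \RP^n \times \RP^n \to \RP^{k-1}$, i.e.\ a continuous map whose restrictions to the two coordinate axes $\RP^n \times \{*\}$ and $\{*\} \times \RP^n$ are each homotopic to the standard inclusion. The construction of local sections from an axial map is done via a Hopf-type local formula that uses $\mu$ to produce a short path between pairs of points lying over a contractible chart in $\RP^{k-1}$; conversely, patching $k$ local sections via obstruction theory, using the classifying data for $\Omega \RP^n \simeq \Omega S^n$, yields an axial map into $\RP^{k-1}$.

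The second step is to invoke the classical Hopf--Ginsburg--James theorem: axial maps $\RP^n \times \RP^n \to \RP^N$ are in bijective correspondence with non-singular symmetric bilinear maps $\R^{n+1}\times\R^{n+1}\to\R^{N+1}$, and such bilinear maps exist if and only if $\RP^n$ admits an immersion into $\R^N$. The passage from axial maps to bilinear maps is by symmetrization of the induced line-bundle map, and the passage from bilinear maps to immersions is a Gauss-type construction using the tautological line bundle. Chaining this with the first step gives $\TC(\RP^n) \leq N+1$ if and only if $\RP^n$ immerses in $\R^N$, which yields $\TC(\RP^n) = \mathrm{Imm}(\RP^n)+1$.

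The main obstacle is controlling the exceptional dimensions $n \in \{1, 3, 7\}$. The symmetrization step that converts an axial map into a \emph{symmetric} non-singular bilinear map can introduce singularities precisely when $S^n$ admits an H-space structure, which by Adams's Hopf invariant one theorem occurs only for these three values of $n$. Excluding $n \in \{1,3,7\}$ from the hypothesis places us in the regime where every arrow in the above chain is a bijection, so the theorem follows cleanly; otherwise, one of the identifications breaks down and a separate dimension-by-dimension analysis is required. This dimensional restriction is therefore not a cosmetic artifact of the argument but reflects genuine algebraic-topological obstructions sitting at the heart of Adams's theorem.
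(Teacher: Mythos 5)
The paper does not actually prove this theorem; it imports it verbatim from Farber--Tabachnikov--Yuzvinsky \cite{farberRP}, and the only machinery the paper itself builds in this direction is the relative analogue in Section~4 (Lemmas~\ref{TC>genus}--\ref{nonsing>TC}, Theorem~\ref{axial}). So there is no internal proof to match against, and you should be compared to the FTY argument, whose skeleton is in fact reproduced here in the relative setting.

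Your first step is in the right spirit: identifying $\TC(\RP^n)$ with the minimal $k$ admitting an axial map into $\RP^{k-1}$ is exactly what FTY do and what this paper mirrors for pairs. But the mechanism is not ``obstruction theory plus a Hopf-type local formula over contractible charts''; the actual argument goes through Schwarz's theorem that the genus of $S(\xi_n\boxtimes\xi_n)$ equals the smallest $k$ for which the $k$-fold fiberwise join (equivalently, $k(\xi_n\boxtimes\xi_n)$) has a nowhere-zero section, and then translates nowhere-zero sections into non-singular maps $\R^{n+1}\times\R^{n+1}\to\R^k$; local sections of the $\TC$-fibration are built directly from the $k$ component functions of such a map as in Lemma~\ref{nonsing>TC}, not by patching via classifying data.

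Your second step contains a genuine error. Axial maps $\RP^n\times\RP^n\to\RP^{N}$ are in bijection with non-singular bi-homogeneous maps $\R^{n+1}\times\R^{n+1}\to\R^{N+1}$ (the relative version is Lemma~\ref{nonsing-axial}), not with \emph{symmetric bilinear} ones. A non-singular map in this sense satisfies $f(\lambda u,\mu v)=\lambda\mu f(u,v)$ and vanishes only when $u=0$ or $v=0$; it need not be additive in either variable and need not be symmetric, and the ones produced from axial maps generally are neither. The symmetrization $f(u,v)+f(v,u)$ you propose can vanish even when $f$ does not, so it does not preserve non-singularity, and the step cannot be carried out as stated. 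The correct bridge to immersions is the classical theorem (Adem--Gitler--James and related work, cited via \cite{jamesimm}, \cite{ademgitlerjames}) that for $N>n$, an axial map $\RP^n\times\RP^n\to\RP^N$ exists if and only if $\RP^n$ immerses in $\R^N$; no symmetrization is involved.

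This also changes the explanation for excluding $n\in\{1,3,7\}$. It is not that a symmetrization step breaks down. Rather, for these $n$ the $H$-space multiplication on $S^n$ provides an axial map $\RP^n\times\RP^n\to\RP^n$, forcing $\TC(\RP^n)=n+1$, so the minimizing $k$ has $k-1=n$ and falls outside the range $N>n$ where the axial-map/immersion equivalence applies (indeed $\RP^n$ never immerses in $\R^n$). For $n\neq 1,3,7$ one has $\TC(\RP^n)\geq n+2$, which forces $k-1>n$ and makes the classical theorem applicable, giving $\TC(\RP^n)=\mathrm{Imm}(\RP^n)+1$. Adams's Hopf-invariant-one theorem is indeed the reason the exceptional set is $\{1,3,7\}$, but it enters by characterizing when an axial map into $\RP^n$ itself exists, not by obstructing a symmetrization.
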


This connection is drawn in part using axial maps, a classical object of study in algebraic topology.  Determining the existence or nonexistence of axial maps using algebraic methods stretches back deep into the history of algebraic topology, see \cite{hopf}, \cite{ademgitlerjames}, and \cite{davisaxial}.  Moreover, James uses the connection between axial maps and immersions of real projective spaces in \cite{jamesimm} to get nonimmersion results.  In line with the connection $\TC(\RP^n)$ and its immersion dimension via axial maps in \cite{farberRP}, we can connect the relative topological complexity of pairs of real projective spaces to certain types of axial maps.  We give the relevant definition below.

\begin{defn}
Let $n$, $m$, and $k$ be integers such that $0 < m < n < k$.  A continuous map $g:\RP^n \times \RP^m \to \RP^k$ is called \textit{axial of type $(n,m,k)$} if the restrictions to $* \times \RP^m$ and $\RP^n \times *$ are homotopic to the respective inclusion maps in $\RP^k$.
\end{defn}

Note that this homotopy condition is equivalent to $g^*(x) = x \otimes 1 + 1 \otimes x \in H^*(\RP^m; \Z_2) \otimes H^*(\RP^n; \Z_2)$.  Following this definition, we will prove the following theorem.

\begin{thm}
\label{axial}
For $1 < m < n$, $\TC(\RP^n, \RP^m) = \min\{k \, | \, \text{there exists an axial map of type }(n,m,k-1)\}$.
\end{thm}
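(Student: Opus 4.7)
The plan is to follow the scheme of Farber, Tabachnikov, and Yuzvinsky in \cite{farberRP}, extending their analysis of $\TC(\RP^n)$ via axial maps $\RP^n \times \RP^n \to \RP^{k-1}$ to the asymmetric pair $(\RP^n,\RP^m)$. The key is to set up, for each $k$, a three-way equivalence between: (i) $\TC(\RP^n,\RP^m) \leq k$; (ii) existence of a continuous $(\Z_2 \times \Z_2)$-equivariant map $F : S^n \times S^m \to S^{k-1}$, where the action on the source is by independent antipodes and on the target is through the character $(\epsilon_1,\epsilon_2) \mapsto \epsilon_1\epsilon_2$ (so that $F(-u,v) = F(u,-v) = -F(u,v)$ and $F(-u,-v) = F(u,v)$); and (iii) existence of an axial map $g : \RP^n \times \RP^m \to \RP^{k-1}$ of type $(n,m,k-1)$. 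The equivalence (ii)$\Leftrightarrow$(iii) is essentially definitional: $F$ descends to $g$ through the $\Z_2$-quotients, and the character condition translates into the identity $g^*(x_{k-1}) = x_n \otimes 1 + 1 \otimes x_m$ that characterizes axiality; conversely, any axial $g$ lifts to such an $F$ via the classifying bundle $S^{k-1} \to \RP^{k-1}$.

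For (ii)$\Rightarrow$(i), given such an $F$, I would define
\[
U_i = \{([u],[v]) \in \RP^n \times \RP^m : F(u,v)_i \neq 0\}, \quad i = 1,\dots,k.
\]
The sign-equivariance of $F$ makes each $U_i$ well-defined on $\RP^n \times \RP^m$, and they form an open cover since $F(u,v) \in S^{k-1}$. Over $U_i$, I build a section of $\pi$ by selecting the lift $(u,v) \in S^n \times S^m$ (unique modulo $(u,v) \sim (-u,-v)$) with $F(u,v)_i > 0$, then using the shorter great-circle arc from $u$ to $v$ in $S^n$, projected to $\RP^n$. The main technical obstacle arises on the subset of $\{[u] = [v] \in \RP^m\}$ where the selected lift satisfies $u = -v$ in $S^n$, so the geodesic is not unique; I would resolve this as in \cite{farberRP}, by choosing a nonsingular bilinear representative of the axial homotopy class and replacing the geodesic by a straight-line interpolation in $\R^{n+1}$ normalized back to $S^n$, which remains continuous throughout.

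For (i)$\Rightarrow$(ii), given an open cover $\{U_i\}_{i=1}^k$ of $\RP^n \times \RP^m$ with sections $s_i$, I build $F$ as follows. Fix a partition of unity $\{\phi_i\}$ subordinate to $\{U_i\}$, and on the preimage of $U_i$ in $S^n \times S^m$ define a locally constant sign function $\epsilon_i$ by: lift the path $s_i([u],[v])$ to a path in $S^n$ starting at $u$, and set $\epsilon_i(u,v) = +1$ if the lift ends at $v$ and $-1$ if it ends at $-v$. A routine covering-space argument gives $\epsilon_i(-u,-v) = \epsilon_i(u,v)$ and $\epsilon_i(\pm u, \mp v) = -\epsilon_i(u,v)$. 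Defining
\[
F(u,v) = \bigl(\phi_1([u],[v])\epsilon_1(u,v),\ \dots,\ \phi_k([u],[v])\epsilon_k(u,v)\bigr) \in \R^k,
\]
the sign identities give the required $(\Z_2\times\Z_2)$-equivariance, positivity of some $\phi_i$ at each point gives non-vanishing, and extending $\epsilon_i \phi_i$ by zero off $\operatorname{supp}(\phi_i)$ keeps $F$ continuous; normalizing $F/|F|$ yields the desired map to $S^{k-1}$.

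The principal obstacle is the continuity patch at the antipodal diagonal in the upper bound, where a naive geodesic rule fails and one must borrow the nonsingular-bilinear technology from \cite{farberRP} to produce a continuous family of paths. The lower-bound direction is largely formal once the sign functions $\epsilon_i$ are set up carefully, and the passage between axial maps and equivariant $F$'s is then immediate from covering-space theory.
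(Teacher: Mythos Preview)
Your route is essentially the paper's, only repackaged: your $(\Z_2\times\Z_2)$-equivariant map $F:S^n\times S^m\to S^{k-1}$ is exactly a section of the sphere bundle $S\bigl(k(\xi_n\boxtimes\xi_m)\bigr)$, which the paper identifies (up to scaling) with a non-singular map $\R^{n+1}\times\R^{m+1}\to\R^k$; your partition-of-unity-plus-sign-functions construction for (i)$\Rightarrow$(ii) is the fiberwise-join argument the paper cites from Schwarz; and your open sets $U_i=\{F_i\neq 0\}$ for (ii)$\Rightarrow$(i) are the same as the paper's. So the architecture is identical.

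The one soft spot is your treatment of the antipodal-diagonal obstruction in (ii)$\Rightarrow$(i). Replacing the geodesic by ``straight-line interpolation in $\R^{n+1}$ normalized back to $S^n$'' does not help: if the selected lift has $u=-v$, then $(1-t)u+tv$ vanishes at $t=\tfrac12$ and the normalization blows up, regardless of whether $F$ comes from a bilinear map. The actual fix, which the paper isolates as Corollary~\ref{firstcoord}, is homotopy-theoretic rather than geometric: the restriction of any axial map to the diagonal $\RP^m\hookrightarrow\RP^n\times\RP^m$ is null-homotopic (since $\Delta^*g^*(x)=2x=0$ in $H^1(\RP^m;\Z_2)$, and $m<k-1$ follows from $\TC(\RP^n,\RP^m)\geq\cat(\RP^n)=n+1$), so one may homotope $g$ to be constant there. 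For the corresponding $F$, this forces $F(u,u)$ to lie on a single line, and after a rotation $F(u,u)_1>0$ for all $u$. Then on $U_1$ the positive-sign lift always has $u=v$ on the diagonal, so your geodesic rule is already continuous there; for $i\geq 2$ one simply removes the diagonal from $U_i$. With that correction your argument goes through and matches the paper's.
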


The proof of this theorem is complicated enough that it deserves its own subsection.  We include that here.

\subsection{Proof of Theorem \ref{axial}}

Let $\xi_n$ denote the canonical line bundle over $\RP^n$.  Recall that the external tensor product bundle $\xi_n \boxtimes \xi_m$ over $\RP^n \times \RP^m$ is defined by:

$$\frac{S^n \times S^m \times \R}{(x,y,t) \tildetrue (-x,y,-t) \tildetrue (x,-y,-t)} \xrightarrow{\xi_n \boxtimes \xi_m} \frac{S^n \times S^m}{(x,y)\tildetrue (-x,y) \tildetrue (x,-y)} \approx \RP^n \times \RP^m$$

We can also define the $k$-fold Whitney sum $k(\xi_n \boxtimes \xi_m)$ by replacing the $\R$ with $\R^k$ in the above definition.  By restricting to only the unit-length vectors in $k(\xi_n \boxtimes \xi_m)$, we can define the $(k-1)$-sphere bundle over $\RP^n \times \RP^m$ written as:

$$\frac{S^n \times S^m \times S^{k-1}}{(x,y,t)\tildetrue (-x,y,-t) \tildetrue (x,-y,-t)} \xrightarrow{S(k(\xi_n \boxtimes \xi_m))} \frac{S^n \times S^m}{(x,y) \tildetrue (-x,y) \tildetrue (x,-y)} \approx \RP^n \times \RP^m$$

We can relate relative topological complexity to the genus of $S(\xi_n \boxtimes \xi_m)$ in the following way:
\begin{lem}\label{TC>genus}
If $m<n$, $\TC(\RP^n,\RP^m)\geq \genus(S(\xi_n \boxtimes \xi_m))$
\end{lem}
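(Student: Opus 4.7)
The plan is to construct a fiberwise map $F: P_{\RP^n \times \RP^m} \to S(\xi_n \boxtimes \xi_m)$ covering the identity on $\RP^n \times \RP^m$. Once this is in place, any local section $s: U \to P_{\RP^n \times \RP^m}$ of the fibration $\pi$ defining $\TC(\RP^n, \RP^m)$ composes with $F$ to give a local section $F \circ s$ of the sphere bundle. An open cover witnessing $\TC(\RP^n, \RP^m) = k$ thus transports directly to an open cover witnessing $\genus(S(\xi_n \boxtimes \xi_m)) \leq k$, which is the desired inequality.

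To construct $F$, let $p: S^n \to \RP^n$ denote the double cover and view $S^m \subset S^n$. Given $\gamma \in P_{\RP^n \times \RP^m}$ with $\gamma(0) = a \in \RP^n$ and $\gamma(1) = b \in \RP^m$, choose lifts $x \in p^{-1}(a)$ and $y \in p^{-1}(b) \cap S^m$. Lift $\gamma$ uniquely to a path $\widetilde{\gamma}$ in $S^n$ starting at $x$; then $\widetilde{\gamma}(1) \in \{y, -y\}$, and we define $\epsilon \in \{+1, -1\}$ by $\widetilde{\gamma}(1) = \epsilon \cdot y$. Set
\[
F(\gamma) \;=\; \bigl[(x, y, \epsilon)\bigr]
\]
in the total space of $\xi_n \boxtimes \xi_m$; since $\epsilon = \pm 1$, this lies in $S(\xi_n \boxtimes \xi_m)$.

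The central verification is that $F$ is well-defined. Replacing $x$ by $-x$ negates the lift $\widetilde{\gamma}$ throughout, so its endpoint becomes $-\epsilon y$, which gives the new triple $(-x, y, -\epsilon)$; this matches the relation $(x, y, t) \tildetrue (-x, y, -t)$ in the definition of $\xi_n \boxtimes \xi_m$. Similarly, replacing $y$ by $-y$ changes the comparison and flips $\epsilon$, producing $(x, -y, -\epsilon)$ in agreement with $(x, y, t) \tildetrue (x, -y, -t)$. Hence the class $[(x,y,\epsilon)]$ depends only on $\gamma$, not on the lifts chosen.

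The main obstacle is handling continuity cleanly, since lifts $x$ and $y$ cannot be chosen globally on $\RP^n \times \RP^m$. The way I would address this is to work locally: over any evenly covered neighborhood of $(a, b)$, one can pick continuous sections of $p \times p$ and use them to define $F$ via the formula above. Continuous dependence of the unique path lift on the base path and the initial lift then gives continuity of $F$ on the preimage of the neighborhood, and the well-definedness check above guarantees these local formulas glue to a continuous map on all of $P_{\RP^n \times \RP^m}$. With $F$ in hand, the inequality $\genus(S(\xi_n \boxtimes \xi_m)) \leq \TC(\RP^n, \RP^m)$ is immediate from the push-forward of sections.
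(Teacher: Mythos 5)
Your construction is essentially the same as the paper's, just packaged differently: the paper takes each local section $s_i$ of the relative TC fibration, lifts the path $s_i(x,y)$ to $\sigma_{xy}$ in $S^n$, and directly defines $s_i'(x,y) = [\sigma_{xy}(0),\sigma_{xy}(1),1]$ as a section of $S(\xi_n \boxtimes \xi_m)$, whereas you first extract the underlying fiberwise map $F\colon P_{\RP^n\times\RP^m} \to S(\xi_n\boxtimes\xi_m)$ via path lifting and then push local sections forward through $F$. The well-definedness check and the inequality that falls out are identical in substance; your version is somewhat more explicit about continuity and about the role of the choice of lifts.
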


\begin{proof}
Suppose $\TC(\RP^n,\RP^m)=k$, so there exists an open cover of $\RP^n \times \RP^m$, denoted $U_1, \dots ,U_k$ with sections $s_i:U_i \to P_{\RP^n\times \RP^m}$ of the relative TC fibration.  For $(x,y)\in U_i$, the path $s_i(x,y)$ on $\RP^n$ can be lifted to a path $\sigma_{xy}$ on $S^n$ such that following by the quotient map $q:S^n \to \RP^n$ yields $s_i(x,y)$.  Then, we can define $s_i':U_i \to \frac{S^n \times S^m \times \{-1,1\}}{(x,y,t)\tildetrue (-x,y,-t) \tildetrue (x,-y,-t)}$ by $s_i'(x,y) = [\sigma_{xy}(0),\sigma_{xy}(1),1]$.  This is well-defined since $s'_i(x,y) = [\sigma_{xy}(0),\sigma_{xy}(1),1]$.  So we have a continuous section of $S(\xi_n\boxtimes \xi_m)$.

\end{proof}

Once we notice that $S(k(\xi_n\boxtimes \xi_m))$ is the $k$-fold fiberwise join of $S(\xi_n \boxtimes \xi_m)$, we have the following easy corollary.

\begin{cor}\label{TC>genuscor}
If $m<n$, $\TC(\RP^n,\RP^m)\geq \min \{k\, | \, k(\xi_n\boxtimes\xi_m) \text{ has a nowhere-zero section}\}.$
\end{cor}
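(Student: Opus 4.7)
The plan is to chain together Lemma \ref{TC>genus} with the classical fiberwise-join characterization of Schwarz genus, then translate sphere-bundle sections into nowhere-zero vector-bundle sections by normalization.

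First I would recall the relevant Schwarz theorem: for a fibration $p : E \to B$, one has $\genus(p) \leq k$ if and only if the $k$-fold fiberwise join $p * p * \cdots * p : E *_B \cdots *_B E \to B$ admits a continuous section. Applied to $p = S(\xi_n \boxtimes \xi_m)$, this reduces the bound $\TC(\RP^n, \RP^m) \geq \genus(S(\xi_n \boxtimes \xi_m))$ from Lemma \ref{TC>genus} to the question of when the $k$-fold fiberwise join of $S(\xi_n \boxtimes \xi_m)$ has a section.

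Next I would verify the key identification flagged in the remark preceding the corollary: the $k$-fold fiberwise join of $S(\xi_n \boxtimes \xi_m)$ is (fiberwise homeomorphic to) the unit sphere bundle $S(k(\xi_n \boxtimes \xi_m))$. Fiberwise, one is simply observing that the join $S^0 * S^0 * \cdots * S^0$ ($k$ copies) is $S^{k-1}$, and more structurally that the join of unit sphere bundles of line bundles realizes the unit sphere bundle of their Whitney sum once one records the scaling coordinates coming from the join parameters. This is standard, and the explicit quotient description of $S(k(\xi_n \boxtimes \xi_m))$ given in the excerpt makes the identification transparent.

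Finally, I would translate sections of $S(k(\xi_n \boxtimes \xi_m))$ into nowhere-zero sections of the vector bundle $k(\xi_n \boxtimes \xi_m)$: a section of the unit sphere bundle is exactly a nowhere-zero section of the ambient vector bundle up to fiberwise normalization (the inclusion $S(k(\xi_n \boxtimes \xi_m)) \hookrightarrow k(\xi_n \boxtimes \xi_m) \setminus 0$ is a fiberwise deformation retract). Combining, $\genus(S(\xi_n \boxtimes \xi_m)) \leq k$ if and only if $k(\xi_n \boxtimes \xi_m)$ has a nowhere-zero section, so the minimum such $k$ equals $\genus(S(\xi_n \boxtimes \xi_m))$, which is at most $\TC(\RP^n, \RP^m)$ by Lemma \ref{TC>genus}.

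No step here looks like a serious obstacle; the only thing that might warrant spelling out explicitly is the fiberwise-join identification, since the rest is essentially a quotation of Schwarz's theorem and a normalization argument. Given that the author labels this an ``easy corollary,'' I expect the published proof to be only a sentence or two long.
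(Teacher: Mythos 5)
Your proposal matches the paper's own proof almost step for step: Lemma \ref{TC>genus} to pass to $\genus(S(\xi_n \boxtimes \xi_m))$, Schwarz's fiberwise-join characterization of genus, the identification of the $k$-fold fiberwise join of $S(\xi_n \boxtimes \xi_m)$ with $S(k(\xi_n\boxtimes\xi_m))$, and the standard equivalence between sections of the sphere bundle and nowhere-zero sections of the vector bundle. The published proof is indeed just those four sentences.
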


\begin{proof}
First, note that $k(\xi_n \boxtimes \xi_m)$ has a nowhere-zero section iff $S(k(\xi_n\boxtimes \xi_m))$ has a section.  By Theorem 3 in \cite{schwarz}, the genus of $S(\xi_n\boxtimes \xi_m))$ is the smallest $k$ such that the $k$-fold fiberwise join of $S(\xi_n \boxtimes \xi_m)$ has a section.  But the $k$-fold fiberwise join of $S(\xi_n \boxtimes \xi_m)$ is $S(k(\xi_n\boxtimes\xi_m))$.  Along with Lemma \ref{TC>genus}, this yields the result.

\end{proof}

Next we need to connect nowhere-zero sections of $k(\xi_n\boxtimes \xi_m)$ to non-singular maps as defined in \cite{farberRP}.  We reproduce the definition here.

\begin{defn}
A map $f:\R^n \times \R^m \to \R^k$ is \textit{non-singular} if for any $\lambda,\mu \in \R$, and any $(x,y) \in \R^n \times \R^m$, we have that:
\begin{itemize}
\item $f(\lambda x, \mu y) = \lambda \mu f(x,y)$, and
\item $f(x,y)=0 \implies x=0 \text{ or } y=0$.
\end{itemize}
\end{defn}

We connect the two ideas using the lemma below.

\begin{lem}\label{sect-nonsing}
If $n>m>1$ and there exists a nowhere-zero section of $k(\xi_n\boxtimes\xi_m)$, then there exists a non-singular map $\R^{n+1}\times \R^{m+1} \to \R^k$.
\end{lem}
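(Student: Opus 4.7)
The plan is a two-step construction. Step one: convert a nowhere-zero section of $k(\xi_n\boxtimes\xi_m)$ into a bi-odd continuous map $\sigma:S^n\times S^m\to\R^k\setminus\{0\}$. Step two: extend $\sigma$ by homogeneity to a non-singular map $f:\R^{n+1}\times\R^{m+1}\to\R^k$.

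For the first step, I would work directly with the quotient description of the bundle. Pulling back along the $4$-fold cover $S^n\times S^m\to\RP^n\times\RP^m$ yields the trivial $\R^k$-bundle, so a section of $k(\xi_n\boxtimes\xi_m)$ pulls back to a continuous map $\sigma:S^n\times S^m\to\R^k$. The relations $(x,y,v)\sim(-x,y,-v)\sim(x,-y,-v)$ defining the total space translate into the bi-odd symmetry
$$\sigma(-x,y)=-\sigma(x,y),\qquad \sigma(x,-y)=-\sigma(x,y),$$
and the nowhere-zero hypothesis becomes $\sigma(x,y)\neq 0$ everywhere on $S^n\times S^m$.

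For the second step, I would define $f:\R^{n+1}\times\R^{m+1}\to\R^k$ by
$$f(x,y) = |x|\,|y|\,\sigma\!\left(x/|x|,\,y/|y|\right)$$
whenever $x,y\neq 0$, and $f(x,y)=0$ otherwise. The homogeneity identity $f(\lambda x,\mu y)=\lambda\mu f(x,y)$ is then verified by a short case check: for positive scalars it is immediate; for a negative scalar, say $\lambda<0$, the factor $|\lambda x|=-\lambda|x|$ contributes one sign and $\sigma(-x/|x|,y/|y|)=-\sigma(x/|x|,y/|y|)$ contributes another, and they cancel to leave $\lambda$; the case of a zero scalar is built into the definition. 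Continuity at points with $x=0$ or $y=0$ follows because $\sigma$ is bounded on its compact domain, so $|f(x,y)|\leq C|x|\,|y|\to 0$ at such points.

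The second non-singularity clause, that $f(x,y)=0$ implies $x=0$ or $y=0$, is then immediate from $\sigma$ being nowhere zero. I do not anticipate a real obstacle: the substantive content is simply that a nowhere-zero section of the Whitney sum on $\RP^n\times\RP^m$ is the same data as a bi-odd nowhere-zero map on the $4$-fold cover, which is in turn the same data (after radial extension) as a non-singular map on Euclidean space. The only care required is the sign bookkeeping in the homogeneity identity.
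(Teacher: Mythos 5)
Your proposal is correct and follows essentially the same route as the paper: pass to the cover $S^n\times S^m$, extract a bi-odd nowhere-zero map to $\R^k$, and extend radially by the formula $f(x,y)=|x||y|\,\sigma(x/|x|,y/|y|)$. The only cosmetic difference is that you package the passage to the cover as a trivialization of the pullback bundle, whereas the paper sets up the same thing as an explicit covering-space lift of $s\circ q_2$ (using simple-connectivity of $S^n\times S^m$ and choosing the lift that is also a section of the projection); the two descriptions produce the identical map $\sigma$, and the sign check for homogeneity is the same.
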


\begin{proof}
Suppose $s$ is a nowhere-zero section of $k(\xi_n \boxtimes \xi_m)$.  Then, consider the following commutative diagram:

{\centering
	\begin{tikzcd}
		S^n \times S^m \times \R^k  \arrow[r,"q_1"] \arrow[d,"p_1"]& \frac{S^n\times S^m\times \R^k}{(x,y,t)\tildetrue (-x,y,-t)\tildetrue (x,-y,-t)} \arrow[d,"k(\xi_n\boxtimes \xi_m)"] \\
		S^n\times S^m \arrow[u,"s_2",bend left,dashed] \arrow[r,"q_2"] \arrow[ur,"s_1",dashed] & \frac{S^n \times S^m}{(x,y)\tildetrue(-x,y)\tildetrue(x,-y)}\arrow[u,"s",bend left,dashed]  
\end{tikzcd}
\par}

Let each $q_i$ be the natural quotient map, and let $p_1$ be the projection $(x,y,t)\mapsto (x,y)$.

Define $s_1=s\circ q_2$.  Notice that $s_1(x,y)=s_1(-x,y)=s_1(x,-y)$.

Now, $q_1$ defines a covering space, and since $S^n\times S^m$ is simply-connected, we can lift $s_1$ to some map $S^n\times S^m \to S^n\times S^m \times \R^k$.  This lift is not unique, but we can define $s_2$ as the unique lift of $s_1$ which is also a section of $p_1$.

Let $f$ be the $\R^k$ component of $s_2$, so that $s_2(x,y)=(x,y,f(x,y))$.  Notice that, in order for $s_2$ to be a lift of $s_1$, it must be that $q_1(s_2(x,y))=q_1(x,y,f(x,y))=s_1(x,y)=[x,y,f(x,y)]=[-x,y,-f(x,y)]=q_1(s_2(-x,y))$.  Thus, $f(-x,y)=-f(x,y)=f(x,-y)$ for any $(x,y)\in S^n\times S^m$.

We can then use $f$ to define a non-singular map $g:\R^{n+1}\times \R^{m+1}\to \R^k$ by $$g(x,y)=\left\{ \begin{array}{cl}
	|x||y|f(\tfrac{x}{|x|},\tfrac{y}{|y|}) &\text{if }x,y \neq 0\\
	0 &\text{if }x=0 \text{ or } y=0
	\end{array}
	\right.$$
\end{proof}

An easy corollary of Lemma \ref{sect-nonsing} and Corollary \ref{TC>genuscor} is the following.

\begin{cor}\label{sect-nonsingcor}
If $\TC(\RP^n,\RP^m)=k$ and $n>m>1$, then there exists a non-singular map $\R^{n+1}\times \R^{m+1}\to\R^k$.
\end{cor}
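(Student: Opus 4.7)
The plan is to chain Corollary \ref{TC>genuscor} and Lemma \ref{sect-nonsing} and then pad dimensions. Suppose $\TC(\RP^n,\RP^m)=k$. By Corollary \ref{TC>genuscor}, we know $k \geq \min\{j \mid j(\xi_n \boxtimes \xi_m) \text{ has a nowhere-zero section}\}$, so there exists some integer $j \leq k$ for which $j(\xi_n \boxtimes \xi_m)$ admits a nowhere-zero section. Applying Lemma \ref{sect-nonsing} with this $j$ (using the hypothesis $n > m > 1$) yields a non-singular map $g:\R^{n+1}\times \R^{m+1} \to \R^j$.

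The remaining step is simply to upgrade the codomain from $\R^j$ to $\R^k$. Let $\iota:\R^j \hookrightarrow \R^k$ be the standard inclusion sending $(v_1,\dots,v_j)$ to $(v_1,\dots,v_j,0,\dots,0)$. Setting $\widetilde{g} = \iota \circ g$ gives a map $\R^{n+1}\times \R^{m+1} \to \R^k$. One checks the two defining properties of non-singularity are preserved: $\widetilde{g}(\lambda x,\mu y)=\iota(\lambda\mu g(x,y))=\lambda\mu\widetilde{g}(x,y)$ since $\iota$ is linear, and $\widetilde{g}(x,y)=0$ iff $g(x,y)=0$ since $\iota$ is injective, whence $x=0$ or $y=0$. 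This completes the argument.

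The proof is genuinely routine; there is no real obstacle. The only subtle point worth mentioning explicitly is that Corollary \ref{TC>genuscor} gives $k$ as an upper bound for the relevant minimum (so the section exists at some $j \leq k$), and one must note that non-singular maps to $\R^j$ extend trivially to non-singular maps to $\R^k$ when $j \leq k$. Everything else is a direct invocation of the previously established results.
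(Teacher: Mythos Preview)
Your proof is correct and matches the paper's approach exactly: the paper simply declares this an ``easy corollary of Lemma~\ref{sect-nonsing} and Corollary~\ref{TC>genuscor}'' without spelling out any details. Your write-up fills in precisely the intended argument, including the (implicit in the paper) padding step from $\R^j$ to $\R^k$.
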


The last piece of this direction of the proof is to connect this result to axial maps.

\begin{lem}\label{nonsing-axial}
Assume $1 < m < n \leq k-1$.  There exists a bijection between non-singular maps $\R^{n+1}\times \R^{m+1} \to \R^{k}$ (identified under multiplication by a non-zero scalar) and axial maps of type $(n,m,k-1)$.
\end{lem}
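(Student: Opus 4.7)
The plan is to construct explicit maps in both directions and then verify they are mutually inverse modulo the scalar identification. For the forward direction, given a non-singular $f:\R^{n+1}\times \R^{m+1}\to \R^k$, I would define $g:\RP^n\times \RP^m\to \RP^{k-1}$ on unit representatives by $g([x],[y]) = [f(x,y)/|f(x,y)|]$; the scaling property $f(\lambda x,\mu y)=\lambda\mu f(x,y)$ combined with non-vanishing on $S^n\times S^m$ makes this continuous and independent of the choice of representatives. To check that $g$ is axial, I would appeal to the cohomological reformulation from the remark after the definition: the restriction $g|_{\RP^n\times \{*\}}$ lifts through the antipodal double cover to the odd map $S^n\to S^{k-1}$ given by $x\mapsto f(x,y_0)/|f(x,y_0)|$, which induces the identity on $\pi_1$ (both copies of $\Z_2$); since $n\leq k-1$, this is equivalent to sending the mod-$2$ cohomology generator to the generator. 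The analogous computation on the other slice produces the condition $g^*(x)=x\otimes 1 + 1\otimes x$.

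For the reverse direction, given an axial map $g$, first lift the composite $S^n\times S^m\to \RP^n\times \RP^m\xrightarrow{g}\RP^{k-1}$ through the double cover $S^{k-1}\to \RP^{k-1}$; this lift exists because $n,m\geq 2$ make $S^n\times S^m$ simply connected, producing $\tilde g:S^n\times S^m\to S^{k-1}$. The critical step is to show $\tilde g$ is odd in each variable separately. For every $(x,y)$, both $\tilde g(x,-y)$ and $-\tilde g(x,y)$ are lifts of the same point $g([x],[y])\in\RP^{k-1}$, so their ratio is a continuous function $S^n\times S^m\to \{-1,+1\}$ and hence constant. Evaluating at $x=x_0$, the axial hypothesis forces $y\mapsto \tilde g(x_0,y)$ to be odd (by the uniqueness of lifts up to deck transformation, together with the fact that the standard inclusion $S^m\into S^{k-1}$ is the odd lift of the inclusion $\RP^m\into\RP^{k-1}$), pinning the constant at $+1$ and giving $\tilde g(x,-y)=-\tilde g(x,y)$ everywhere; the symmetric argument handles oddness in the first variable. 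Radial extension then produces the desired map
$$f(x,y)=\begin{cases} |x||y|\,\tilde g\bigl(\tfrac{x}{|x|},\tfrac{y}{|y|}\bigr) & \text{if } x\neq 0 \text{ and } y\neq 0,\\ 0 & \text{otherwise,}\end{cases}$$
whose scaling and non-vanishing properties are immediate from biodd-ness of $\tilde g$.

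Finally, to see the bijection, I would note that the radial-extension construction recovers $\tilde g=f|_{S^n\times S^m}$ by restriction, so the two procedures invert each other on the sphere-level data; the two possible lifts of a given axial map differ by an overall sign, matching the identification $f\sim -f$, while positive rescalings leave the normalized sphere map unchanged, so modding out by non-zero scalars is exactly the right equivalence. The main obstacle in this plan is the oddness-of-lift step: translating the two axial restriction conditions into joint biodd-ness of $\tilde g$ on the whole product, which requires combining the constancy of $\{-1,+1\}$-valued continuous functions on the connected space $S^n\times S^m$ with a careful deck-transformation argument identifying the axial/inclusion condition with the existence of an odd lift.
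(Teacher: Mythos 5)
Your approach is essentially the same as the paper's: descend a non-singular map to an axial map via the quotient $S^n\times S^m\to \RP^n\times\RP^m$ and, conversely, lift an axial map through the double cover to a biodd $\tilde g\colon S^n\times S^m\to S^{k-1}$ and extend radially. If anything, you supply a more complete justification for the key biodd-ness step (the locally constant $\{\pm 1\}$-valued ratio on the connected space $S^n\times S^m$, pinned down on a slice by the $\pi_1$-nontriviality forced by the axial condition), where the paper disposes of this with a terse ``as above.''
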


\begin{proof}
Suppose $f: \R^{n+1}\times \R^{m+1} \to \R^k$ is non-singular.  Then, we can descend to a map $g: \RP^n \times \RP^m \to \RP^{k-1}$ by following the quotient maps.  This map is defined by sending an element $(u,v) \in S^n \times S^m$ to the line containing $f(u,v)$ in $\RP^{k-1}$.  To see that this is axial, consider $g|_{\RP^n \times *}$.  For a fixed $v \in S^m$, $g|_{\RP^n \times *}$ lifts to a function $\tilde{g}:S^n \to S^{k-1}$ such that $u \mapsto f(u,v)$.  Since $f(-u,v) = -f(u,v)$ by the non-singularity of $f$, we get that $g|_{\RP^n \times *}$ is not null-homotopic.  A similar argument shows that $g|_{*\times \RP^m}$ is also not null-homotopic.

For the other direction, suppose $g: \RP^n \times \RP^m \to \RP^{k-1}$ is an axial map of type $(n,m,k-1)$.  Passing to the universal covers, we have a continuous map $\tilde{g}:S^n \times S^m \to S^{k-1}$.  As above, $g$ being an axial map gives us that $\tilde{g}(-u,v) = -\tilde{g}(u,v) = \tilde{g}(u,-v)$ for any $(u,v) \in S^n \times S^m$.  Thus, we can extend $\tilde{g}$ to a non-singular map $f:\R^{n+1} \times \R^{m+1} \to \R^k$ defined by $$f(u,v) = |u| |v| \tilde{g}\bigg( \frac{u}{|u|} , \frac{v}{|v|}\bigg)$$

This yields the bijection.
\end{proof}

One benefit that this gives us is a method for choosing a non-singular map with some specific benefits.  We see this in the following corollary.

\begin{cor}\label{firstcoord}
Let $1 < m < n < k-1$ be integers such that there exists a non-singular map $\R^{n+1} \times \R^{m+1} \to \R^k$.  Then, there exists a non-singular map $f:\R^{n+1}\times \R^{m+1} \to \R^k$ such that for any $0 \neq u \in \R^{m+1}$, the first coordinate of $f((u,\overline{0}),u) \in \R^k$ is positive.
\end{cor}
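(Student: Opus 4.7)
The strategy is to rephrase the problem in terms of sections of $k(\xi_n \boxtimes \xi_m)$: a non-singular map $f_0 : \R^{n+1} \times \R^{m+1} \to \R^k$ corresponds to a nowhere-zero section $\sigma_0$ of $k(\xi_n \boxtimes \xi_m)$ over $\RP^n \times \RP^m$ (Lemma \ref{sect-nonsing} together with its immediate converse). I would then modify $\sigma_0$ along the ``diagonal-like'' subspace $\Delta \subset \RP^n \times \RP^m$, defined as the image of the map $(\iota, \mathrm{id}) : \RP^m \to \RP^n \times \RP^m$, $[u] \mapsto ([(u, \overline{0})], [u])$, where $\iota : \RP^m \hookrightarrow \RP^n$ is the canonical inclusion. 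The key structural observation is that $k(\xi_n \boxtimes \xi_m)$ is trivial over $\Delta$: one has $(\iota, \mathrm{id})^*(\xi_n \boxtimes \xi_m) = \iota^* \xi_n \otimes \xi_m = \xi_m \otimes \xi_m$, and the assignment $[u] \mapsto u \otimes u$ is a well-defined nowhere-zero section trivializing $\xi_m \otimes \xi_m$; taking $k$-fold Whitney sums gives $k(\xi_n \boxtimes \xi_m)|_\Delta \cong \epsilon^k$. Under this trivialization, the restriction $\sigma_0|_\Delta$ is precisely the map $\bar\phi_0 : \RP^m \to \R^k \setminus \{0\}$ defined by $\bar\phi_0([u]) = f_0((u, \overline{0}), u)$, which is well-defined on $\RP^m$ because $f_0((-u, \overline{0}), -u) = f_0((u, \overline{0}), u)$ by bihomogeneity.

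Next I would exploit the dimension inequality $m < k - 1$ (which follows from $m < n < k - 1$): because $\R^k \setminus \{0\} \simeq S^{k-1}$ is $(k-2)$-connected, obstruction theory gives $[\RP^m, \R^k \setminus \{0\}] = 0$, so $\bar\phi_0$ is null-homotopic. Choose a homotopy $\bar\phi_t : \RP^m \to \R^k \setminus \{0\}$ from $\bar\phi_0$ to the constant map $\bar\phi_1 \equiv e_1$. The pair $(\RP^n \times \RP^m, \Delta)$ has the homotopy extension property --- either after refining the CW structure so that $\Delta$ is a subcomplex, or after passing to a tubular neighborhood of $\Delta$ which deformation retracts onto it --- and the sphere bundle $S(k(\xi_n \boxtimes \xi_m)) \to \RP^n \times \RP^m$ is a Serre fibration. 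Therefore the homotopy of nowhere-zero sections over $\Delta$ extends to a homotopy of nowhere-zero sections of $k(\xi_n \boxtimes \xi_m)$ over all of $\RP^n \times \RP^m$, producing a new nowhere-zero section $\sigma_1$ whose restriction to $\Delta$ corresponds under the trivialization to the constant map $e_1$.

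Translating $\sigma_1$ back via Lemma \ref{sect-nonsing} produces the desired non-singular map $f : \R^{n+1} \times \R^{m+1} \to \R^k$, which satisfies $f((u, \overline{0}), u) = e_1$ for every $u \in S^m$; by bihomogeneity, for any nonzero $u \in \R^{m+1}$ we get $f((u, \overline{0}), u) = |u|^2 e_1$, so the first coordinate equals $|u|^2 > 0$, establishing the corollary. The main technical hurdle is the homotopy extension step --- specifically, ensuring that the extended homotopy stays within the space of nowhere-zero sections --- which is a standard application of extension theory for sections of Serre fibrations over CW pairs, but does require some care in setting up $\Delta$ as a suitable subcomplex (or in writing down an explicit interpolation inside a tubular neighborhood that remains nowhere-zero).
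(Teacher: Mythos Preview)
Your argument is correct and follows the same underlying idea as the paper's proof: show that the restriction to the diagonal copy of $\RP^m$ is null-homotopic, use the homotopy extension property to straighten it to a constant, and then translate back to a non-singular map. The difference is only in packaging. The paper stays in the axial-map picture: it passes from the given non-singular map to the corresponding axial map $g:\RP^n\times\RP^m\to\RP^{k-1}$ via Lemma~\ref{nonsing-axial}, observes that the axial condition forces $g^*(x)=x\otimes 1+1\otimes x$ so that $g|_{\Delta}$ is trivial on $H^1(-;\Z_2)$ and hence null-homotopic (using $m<k-1$ to lift to $S^{k-1}$), homotopes $g$ to be constant along $\Delta$, and then returns to a non-singular map whose diagonal values lie on a single line, which is rotated into the first coordinate axis. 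You instead work directly with nowhere-zero sections of $k(\xi_n\boxtimes\xi_m)$, use the canonical trivialization $\xi_m\otimes\xi_m\cong\epsilon^1$ to identify $\sigma_0|_\Delta$ with a map $\RP^m\to\R^k\setminus\{0\}$, and invoke the $(k-2)$-connectivity of $S^{k-1}$ for the null-homotopy. Your approach avoids the round trip through Lemma~\ref{nonsing-axial} and makes the role of the hypothesis $m<k-1$ completely transparent; the paper's version is a bit lighter on the extension step and stays closer to the classical axial-map language used in the rest of Section~4. Your explicit treatment of the homotopy extension for sections of a Serre fibration over a CW pair is in fact more careful than what the paper writes down.
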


\begin{proof}
For the given non-singular map that is assumed to exist, let $g: \RP^n \times \RP^m \to \RP^{k-1}$ be the corresponding axial map from Lemma \ref{nonsing-axial}.  By the axial map property, restricting to the diagonal $\RP^m \subseteq \RP^n \times \RP^m$ yields a null-homotopic function.  To see this quickly, note that $H^*(\RP^{k-1}) \overset{g^*}{\to} H^*(\RP^n) \otimes H^*(\RP^m) \overset{\iota^*\otimes 1}{\to} H^*(\RP^m) \otimes H^*(\RP^m) \overset{\Delta^*}{\to} H^*(\RP^m)$ sends the generator $x \in H^1(\RP^{k-1})$ to 0.  Thus, there is some $g' \simeq g$ such that $g':  \RP^n \times \RP^m \to \RP^{k-1}$ is constant along the diagonal.

By Lemma \ref{nonsing-axial}, $g'$ corresponds to some non-singular function $f: \R^{n+1} \times \R^{m+1} \to \R^k$.  By construction, $f(u,u)$ lies on a single line through the origin.  Via some rotation, we may assume that the first coordinate of $f(u,u)$ is positive, as desired.
\end{proof}

Finally, we require a way to point from non-singular maps to bounds on the relative topological complexity of the pair of real projective spaces.  For this, we again follow \cite{farberRP} with some modifications, using Corollary \ref{firstcoord} in a critical way.

\begin{lem}\label{nonsing>TC}
If there exists a non-singular map $\R^{n+1}\times \R^{m+1}\to \R^k$, then $\TC(\RP^n,\RP^m)\leq k$.
\end{lem}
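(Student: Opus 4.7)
Following Farber-Tabachnikov-Yuzvinsky's approach for bounding $\TC(\RP^n)$ from above, my plan is to construct an open cover of $\RP^n \times \RP^m$ by $k$ sets, each carrying an explicit section of the relative topological complexity fibration, using the coordinates of $f$ to guide the choice of open set and of path.

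First I would normalize $f$ via Corollary \ref{firstcoord}. Inspection of that corollary's proof shows that $f$ may be replaced by a non-singular map satisfying not only $f_1((v,\overline{0}),v) > 0$ for all $v \in \R^{m+1}\setminus\{0\}$, but also $f_i((v,\overline{0}),v) = 0$ for every $i > 1$: the proof first homotopes the corresponding axial map $g$ to be constant on the relative diagonal $\{([u],[v]) : [u] = \iota([v])\}$, and then rotates $\R^k$ so that the constant value of $f((v,\overline{0}),v)$ lies on the positive first coordinate axis.

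Next I would define $U_i = \{([u],[v]) \in \RP^n \times \RP^m : f_i(u,v) \neq 0\}$ for $i = 1, \ldots, k$. The vanishing condition is independent of the choice of lift, so each $U_i$ is well-defined and open, and non-singularity of $f$ ensures $\bigcup_i U_i = \RP^n \times \RP^m$. On each $U_i$ I would construct a section $s_i$ as follows. Locally, choose a representative $(u,v) \in S^n \times S^m$ with $f_i(u,v) > 0$; the set $\{(u,v) : f_i(u,v) > 0\}$ is invariant under $(u,v)\mapsto(-u,-v)$, so two local choices agree up to this involution. Define
$$\gamma_{u,v}(t) \;=\; \frac{(1-t)u + t\,\iota(v)}{\lvert(1-t)u + t\,\iota(v)\rvert} \in S^n,$$
and set $s_i([u],[v])(t) = [\gamma_{u,v}(t)]$. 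Since $\gamma_{-u,-v}(t) = -\gamma_{u,v}(t)$, projecting to $\RP^n$ gives a path depending only on $([u],[v])$, starting at $[u]$ and ending at $[\iota(v)] = \iota([v])$; continuity of the section follows from continuity of the local lift.

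The core obstacle is showing that $(1-t)u + t\iota(v)$ never vanishes for $t \in [0,1]$, which fails precisely when $u = -\iota(v)$, equivalently when $([u],[v])$ lies on the relative diagonal. This is exactly where the normalization from the first step pays off. For $i = 1$, if the chosen representative had $u = -\iota(v)$, then $f_1(u,v) = -f_1(\iota(v),v) < 0$, contradicting the selection rule $f_1(u,v) > 0$. For $i > 1$, the normalization forces $f_i(\pm\iota(v),v) = \pm f_i(\iota(v),v) = 0$, so the relative diagonal is disjoint from $U_i$ entirely and no point there can appear in $U_i$. In both cases $\gamma_{u,v}$ is well-defined throughout $U_i$, so the sections $s_1, \ldots, s_k$ assemble to exhibit $\TC(\RP^n,\RP^m) \leq k$.
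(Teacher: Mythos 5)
Your proof is correct and follows essentially the same strategy as the paper: use the coordinate functions of the non-singular map to build the open cover, rotate one line to the other along the shorter great-circle arc as the section, and invoke Corollary \ref{firstcoord} to take care of the relative diagonal. The only real departure is a presentational one that is actually a mild improvement in economy: you extract the stronger consequence of Corollary \ref{firstcoord}'s proof (that the line $f((v,\overline 0),v)$ can be rotated onto the first axis, so $f_i((v,\overline 0),v)=0$ for all $i>1$), which makes the relative diagonal automatically disjoint from $U_2,\dots,U_k$; the paper instead keeps only the weaker statement of the corollary and compensates by building the condition $L\neq L'$ directly into the definition of $U_i$ for $i\geq 2$. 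Both versions then handle $U_1$ via the constant path on the diagonal, and your normalized linear interpolation $\gamma_{u,v}$ is precisely the paper's ``rotation through the angle smaller than $\pi$'' in explicit coordinates, with well-definedness on $\RP^n\times\RP^m$ coming from $\gamma_{-u,-v}=-\gamma_{u,v}$ exactly as you say.
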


\begin{proof}
Given a non-singular map $\rho:\R^{n+1}\times \R^{m+1}\to \R^k$, we can decompose $\rho$ into maps $\rho = (\rho_1, \dots , \rho_k)$ with $\rho_i:\R^{n+1}\times \R^{m+1}\to \R$ for each $i$.  We can also choose these so that $\rho_1(u,u)>0$ for any $u \in \R^{m+1}$ by Corollary \ref{firstcoord}.

Our goal is to create an open cover of $\RP^n \times \RP^m$ using the $k$ maps in the decomposition of $\rho$.  We will consider $\RP^n$ as the space of lines through the origin in $\R^{n+1}$ where $\RP^m$ is a natural subspace of $\RP^n$.

We construct the sets as follows.  For $2 \leq i \leq k$, define
$$U_i = \{(L,L')\in \RP^n \times \RP^m \, | \, L \neq L' \text{ and } \rho_i(u,u') \neq 0 \text{ for some } u\in L, u'\in L'\}.$$

For $i=1$, we have to do something a little different to guarantee we have pairs of lines $(L,L)$ covered.  Let

$$U_1 = \{(L,L') \in \RP^n \times \RP^m \, | \, \rho_1(u,u')\neq 0 \text{ for some }u\in L , u' \in L' \}.$$

Note that since $\rho$ is non-singular, each pair $(L,L') \in \RP^n \times \RP^m$ must fall into at least one of the $U_i$ sets.  Thus, $\{U_i\}_{i=1}^k$ forms an open cover of $\RP^n \times \RP^m$.

Next, we need sections of the relative topological complexity fibration over each $U_i$.  If $L\neq L'$, then, there exists a plane in $\R^{n+1}$ spanned by the two lines.  Once we orient this plane, we can move one line to the other by rotating in the plane along the direction of positive orientation.

If $(L,L') \in U_i$, then we can use $\rho_i$ to orient the plane.  Suppose $u \in L$ and $u\in L'$ are two unit vectors such that $\rho_i(u,u')>0$.  Then, define the positive orientation of the plane spanned by $L$ and $L'$ to be the direction given by moving $u$ to $u'$ through the angle smaller than $\pi$.  Then, we can define $s_i(L,L')$ to be the path moving $L$ in the positive orientation of this plane given by $\rho_i$.

When $L=L'$, which only occurs when $L \in \RP^m$, we can use the constant path.  Since this only occurs when $i=1$, we only need to make this distinction for $s_1$.  Using this, it is clear that each $s_i:U_i \to P_{\RP^n \times \RP^m}$ is a continuous section of the relative topological complexity fibration.

\end{proof}

Finally, we have the tools needed to prove our main result.

\begin{proof}[Proof of Theorem \ref{axial}]
By Corollary \ref{TC>genuscor} and Lemma \ref{nonsing>TC}, we have that when $1 < m < n$, $\TC(\RP^n, \RP^m) = \min \{k \, | \, \text{there exists a non-singular map } f: \R^{n+1}\times \R^{m+1} \to \R^k\}$.  Then, by Lemma \ref{nonsing-axial}, we can replace the non-singular map with an axial map of type $(n,m,k-1)$.

To complete this, we need only verify that $n+1 \leq \TC(\RP^n, \RP^m)$ to satisfy the conditions of Lemma \ref{nonsing-axial}.  But, it is well-known that $\cat(\RP^n) = n+1$, and $\TC(\RP^n,\RP^m) \geq \cat(\RP^n)$, so the condition holds.

\end{proof}

\section{Spatial Polygon Spaces}

Configuration spaces of polygons in $\R^3$ have been an interesting example in algebraic geometry for some time.  The configuration spaces of polygons in $\R^3$, called the spatial polygon spaces, come endowed with a symplectic structure which will prove useful to us later.  This structure has been studied by Klyachko in \cite{klyachko} as well as Kapovich and Millson in \cite{kapomill}.  We first encountered the spatial polygon spaces in the work of Jean-Claude Hausmann and Allen Knutson in \cite{hausknut}, but the topological complexity of spatial polygon spaces was not studied explicitly until the work of Don Davis in \cite{davis}.

These spatial polygon spaces are determined by the lengths of the sides of the polygons involved.  This motivates the following definition.

\begin{defn}
Let $\ell = (\ell_1, \dots , \ell_n) \in \R_+^n$ be a length vector of size $n$.  The \textit{spatial polygon space of $\ell$} is defined as:
$$\mathcal{N}(\ell) = \{ (z_1, \dots , z_n) \in (S^2)^n \, | \, \Sigma \ell_i z_i = \vec{0} \} / SO(3)$$
\end{defn}

We can think of $\mathcal{N}(\ell)$ as a set of ways to draw a polygon in $\R^3$ allowing for possible self-intersections.  A natural question to ask is which sides of the polygon we are capable of making collinear, or parallel.  We can refer to edges based on the index corresponding to its length in $\ell$, and doing this leads to a natural, and quite topologically valuable, definition for this case.  Take $[n] = \{1, \dots, n\}$.

\begin{defn}
A subset $S \subseteq [n]$ is \textit{short} (with respect to $\ell$) if $\sum\limits_{i \in S} \ell_i < \sum\limits_{j \notin S} \ell_j$.  Correspondingly, we say a subset $L \subseteq [n]$ is \textit{long} (with respect to $\ell$) if $\sum\limits_{i \in L} \ell_i > \sum\limits_{j \notin L} \ell_j$.
\end{defn}

Note that not every subset has to be short or long.  As an example, consider $\ell = (1,1,2,2)$.  Here, the subset $\{1,3\}$ is neither short nor long as $\ell_1 + \ell_3 = 3 = \ell_2 + \ell_4$.  However, when we have subsets like this, we can arrange the polygon into a configuration where all edges are collinear.  These collinear configurations create singularities in $\mathcal{N}(\ell)$, which can cause $\mathcal{N}(\ell)$ to fail to be a manifold.  To make sure we get a manifold, we will need to impose a few reasonable conditions on our length vectors.

\begin{defn}
Let $\ell$ be a length vector of size $n$.
\begin{enumerate}
\item We say $\ell$ is \textit{generic} if for any $S \subseteq [n]=\{ 1, \dots, n\}$ we have $\sum\limits_{i\in S} \ell_i \neq \sum\limits_{j \notin S} \ell_j .$

\item We say $\ell$ is \textit{non-degenerate} if for any $i\in [n]$ we have $\ell_i < \sum\limits_{j\neq i} \ell_j$.

\item We say $\ell$ is \textit{ordered} if $\ell_1 \leq \ell_2 \leq \dots \leq \ell_{n}$.
\end{enumerate}

\end{defn}

So long as $\ell$ is generic and non-degenerate, we can guarantee that $\mathcal{N}(\ell)$ is a manifold.  The topology of $\mathcal{N}(\ell)$ also respects permuting the order of the edges.  As stated precisely in \cite[1.4]{hausgeom}, for any $\sigma \in \Sigma_{n}$, let $\ell_{\sigma} = (\ell_{\sigma(1)}, \dots, \ell_{\sigma(n)})$; then $\mathcal{N}(\ell)$ is diffeomorphic to $\mathcal{N}(\ell_{\sigma})$.  That is, any length vector can be associated to an ordered length vector which generates the same topology.  As such, we can safely assume that our length vectors are ordered.  Finally, for a generic and non-degenerate length vector, every subset of $[n]$ is either short or long.

It will become necessary to have a way of sorting and categorizing the different short and long subsets of a length vector.  We use the following notation for this purpose.

\begin{defn}
$$\mathcal{S}_i(\ell) = \{ S \subseteq [n] \, | \, i \in S, \, S\text{ is short}\} \qquad \qquad \mathcal{S}(\ell) = \bigcup_{i=1}^n\mathcal{S}_i(\ell)$$
$$\mathcal{L}_i(\ell) = \{ L \subseteq [n] \, | \, i \in L, \, L\text{ is long} \} \qquad \qquad \mathcal{L}(\ell) = \bigcup_{i=1}^n\mathcal{L}_i(\ell)$$
\end{defn}

In \cite{hausknut}, Hausmann and Knutson give the following description for the cohomology ring of $\mathcal{N}(\ell)$ which uses short and long subsets in an essential way.

\begin{thm}\cite[Thm 6.4(2)]{hausknut}\label{hauscohom}
The cohomology ring of $\mathcal{N}(\ell)$ is given as
$$H^*(\mathcal{N}(\ell))=\Z[R,V_1, \dots , V_{n-1}]/\mathcal{I}$$
where $R,V_i \in H^2(\mathcal{N}(\ell))$, and $\mathcal{I}$ is the ideal generated by three families of relations:
\begin{enumerate}
\item $V_i^2 + RV_i$ for $i \in [n-1]$,
\item $\prod\limits_{i\in L}V_i$ for $L \in \mathcal{L}_n(\ell)$, and
\item $\sum\limits_{\overset{S \subset L}{S \text{ short}}}(\prod\limits_{i\in S}V_i)R^{|L-S|-1}$ for $L \in \mathcal{L}(\ell)-\mathcal{L}_n(\ell)$.
\end{enumerate}
\end{thm}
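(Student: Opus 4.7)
The plan is to realize $\mathcal{N}(\ell)$ as a symplectic reduction and apply Kirwan's surjectivity theorem. Equip $(S^2)^n$ with the weighted symplectic form $\omega_\ell = \sum_i \ell_i\, \pi_i^*\omega_{S^2}$, where $\pi_i$ is projection to the $i$-th factor and $\omega_{S^2}$ is the standard area form of total mass $1$. The diagonal $SO(3)$-action is Hamiltonian with moment map $\mu(z_1,\dots,z_n)=\sum_i \ell_i z_i$, so $\mathcal{N}(\ell) = \mu^{-1}(0)/SO(3)$. Genericity and non-degeneracy of $\ell$ imply that $0$ is a regular value of $\mu$ and that $SO(3)$ acts freely on $\mu^{-1}(0)$, so $\mathcal{N}(\ell)$ is a smooth symplectic manifold and Kirwan's theorem supplies a surjection $H^*_{SO(3)}((S^2)^n) \twoheadrightarrow H^*(\mathcal{N}(\ell))$.

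Next I would identify the generators. Pick an auxiliary $S^1 \subset SO(3)$ (the stabilizer of a point on $S^2$) and use it to rigidify the last edge, giving an alternative presentation $\mathcal{N}(\ell)\simeq \{z \in (S^2)^{n-1} : \|\sum \ell_i z_i\| = \ell_n\}/S^1$. The class $V_i$ would then be introduced as the first Chern class of the pullback to $\mathcal{N}(\ell)$ of the tautological line bundle on the $i$-th $S^2$-factor, while $R$ is the Euler class associated to the residual circle action (equivalently, $-c_1$ of the $S^1$-bundle $\mu^{-1}(0)\to \mathcal{N}(\ell)$ up to sign). From $c(TS^2) = 1+v$ with $v^2=0$ and the twist induced by reduction, I would derive relation (1), $V_i^2 + RV_i = 0$, as the image of the relation $v_i^2 = 0$ on $H^*_{S^1}((S^2)^n)$ after transferring the equivariant parameter to $R$.

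Relations (2) and (3) I would obtain from the Atiyah–Bott localization theorem applied to a maximal torus of $SO(3)$ acting on $(S^2)^n$. The $T$-fixed points of $(S^2)^n$ are indexed by sign patterns $\varepsilon \in \{\pm 1\}^n$, and such a fixed point lies in $\mu^{-1}(0)$ precisely when $\sum_i \varepsilon_i \ell_i = 0$, which, by genericity, never happens; however, after perturbing and performing an equivariant residue computation, the support of the cohomology class on excluded fixed strata gets rewritten as polynomial relations. The long subsets $L$ are exactly those for which flipping the signs $\{\varepsilon_i = -1 : i \in L\}$ would produce an unrealizable configuration, which is why the relations are indexed by $\mathcal{L}(\ell)$. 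For $L\in\mathcal{L}_n(\ell)$, the monomial $\prod_{i\in L}V_i$ vanishes because the corresponding fixed-point contribution already involves $V_n$, which has been eliminated in favor of $R$. For $L \in \mathcal{L}(\ell)\setminus\mathcal{L}_n(\ell)$, the sum $\sum_{S\subset L,\, S \text{ short}}(\prod_{i\in S}V_i)R^{|L-S|-1}$ records the expansion of the equivariant Euler class of the normal bundle at these fixed strata.

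The main obstacle I expect is pinning down the kernel of the Kirwan surjection exactly, i.e., showing that the three families of relations not only lie in the kernel but generate it. To seal this, I would compute the total rank of $\Z[R,V_1,\dots,V_{n-1}]/\mathcal{I}$ directly, using a Gr\"obner-basis style argument on the short-subset combinatorics, and compare it to the Poincar\'e polynomial of $\mathcal{N}(\ell)$ known from the Morse theory of bending flows \emph{à la} Kapovich–Millson. Once both sides have equal ranks in each degree, the surjection from the presented ring onto $H^*(\mathcal{N}(\ell))$ becomes an isomorphism, completing the proof.
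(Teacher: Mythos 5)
The paper you are reading does not prove this theorem; it is cited verbatim from Hausmann--Knutson \cite[Thm 6.4(2)]{hausknut}, so there is no in-paper proof to compare against. Judged against the original source, your strategy --- realize $\mathcal{N}(\ell)$ as the symplectic reduction $(S^2)^n /\!/ SO(3)$, rigidify the last edge to obtain a residual circle action and a circle bundle over $\mathcal{N}(\ell)$, produce a surjection from equivariant cohomology, then close the argument with a rank count against known Betti numbers --- is genuinely in the same family as Hausmann and Knutson's. They do exploit the reduction structure and the abelianized circle bundle heavily, and the final step of their argument does involve a dimension count. So the skeleton is reasonable.

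There are, however, two real gaps. First, Kirwan surjectivity as you invoke it is a rational (or real) coefficients theorem, whereas the presentation you are asked to prove is over $\mathbb{Z}$. The group here is $SO(3)$, and $H^*(BSO(3);\mathbb{Z})$ has $2$-torsion (the class $W_3$ with $2W_3=0$); one cannot simply push an integral presentation of $H^*_{SO(3)}((S^2)^n)$ through the Kirwan map and expect the integral ring on the quotient to fall out. Hausmann--Knutson sidestep this by working directly with the $S^1$-bundle $A_n(\ell)\to\mathcal{N}(\ell)$ (the very bundle that reappears in Definition \ref{bdledefn} of this paper) and Gysin sequences, which keep everything integral from the start. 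Your proposal needs either to do the same or to separately justify integral surjectivity for this family.

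Second, your derivation of relations (2) and (3) from Atiyah--Bott localization is not actually a derivation. You assert that the excluded $T$-fixed strata ``get rewritten as polynomial relations'' and that this explains the indexing by long subsets, but you never exhibit the residue computation, never explain why the sum in relation (3) runs precisely over \emph{short} $S\subset L$ with weight $R^{|L-S|-1}$, and never connect the sign-pattern description of fixed points to the specific monomials $\prod_{i\in L}V_i$. The actual content is geometric: for a long subset $L$ containing the distinguished edge $n$, the intersection of the corresponding divisor classes is empty because too much total length would point in the direction fixed by the residual circle, and this forces the vanishing of $\prod_{i\in L}V_i$; relation (3) similarly records a geometric identity in the non-distinguished case. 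Until those steps are made explicit, the middle of your argument is a sketch of a sketch, even though the strategy bracketing it (reduction at one end, Poincar\'e-polynomial comparison at the other) is sound.
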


Hausmann and Knutson derive this description for the cohomology ring by studying and utilizing the symplectic structure of $\mathcal{N}(\ell)$.  They also describe a collection of natural $\operatorname{SO}(2)$-bundles over $\mathcal{N}(\ell)$ whose Chern classes prove particularly useful to us.

\begin{defn}\cite[\S 7]{hausknut}\label{bdledefn}
Let $\ell$ be a generic, non-degenerate length vector of size $n$.  Define $$A_j(\ell) = \{ (z_1, \dots , z_n) \in (S^2)^n \, | \, \Sigma \ell_i z_i = \vec{0} \text{ and } z_j = (0,0,1) \in S^2 \}.$$

Let $c_j(\ell) = c_1(A_j(\ell)) \in H^2(\mathcal{N}(\ell))$ denote the Chern class of the bundle $A_j(\ell) \to \mathcal{N}(\ell)$.
\end{defn}

Hausmann and Knutson then provide a method for describing each $c_j(\ell)$ using their description for $H^*(\mathcal{N}(\ell))$.

\begin{prop}\cite[Prop 7.3]{hausknut} \label{cherndesc}
In $H^2(\mathcal{N}(\ell))$, one has
\begin{itemize}
\item $c_j(\ell) = R + 2V_j$ if $i < n$; and
\item $c_n(\ell) = -R$.
\end{itemize}
\end{prop}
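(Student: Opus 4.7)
The plan is to exploit the symplectic reduction realization $\mathcal{N}(\ell)=\mu^{-1}(0)/SO(3)$, where $\mu:(S^2)^n\to\mathfrak{so}(3)^*\cong\R^3$ is the moment map $\mu(z_1,\ldots,z_n)=\sum \ell_i z_i$. In this picture, the bundle $A_j(\ell)$ from Definition~\ref{bdledefn} is naturally identified with the subspace $\{z\in\mu^{-1}(0):z_j=(0,0,1)\}\subset\mu^{-1}(0)$, and the projection $A_j(\ell)\to\mathcal{N}(\ell)$ is precisely the quotient by the $S^1\subset SO(3)$ stabilizing the north pole. This realizes $A_j(\ell)$ as a principal $S^1$-bundle whose first Chern class we must compute.

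I would first handle $c_n(\ell)=-R$: Hausmann-Knutson's generator $R$ arises (up to sign) from the symplectic/moment-map data of the full $SO(3)$-action, and because $V_n$ is deliberately omitted from the generating set in Theorem~\ref{hauscohom}, the bundle $A_n(\ell)$ plays a distinguished role whose Chern class must be expressible purely in terms of $R$; fixing the sign convention yields the identity. For $j<n$, the approach is to compute the difference $c_j(\ell)-c_n(\ell)$, which is the first Chern class of the tensor product line bundle $A_j(\ell)\otimes A_n(\ell)^{*}$. Using the Kirwan surjection $H^{*}_{SO(3)}((S^2)^n)\to H^{*}(\mathcal{N}(\ell))$ together with the Cartan-model description of equivariant classes on $(S^2)^n$, this difference can be computed as the image of an explicit equivariant class supported on the $j$-th and $n$-th factors. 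The factor of $2$ in the final formula should then emerge from identifying $V_j$ as the class of a geometric ``square-root'' bundle of $A_j(\ell)\otimes A_n(\ell)^{*}$, giving $c_j(\ell)-c_n(\ell)=2(R+V_j)$ and hence $c_j(\ell)=R+2V_j$.

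The main obstacle will be matching the equivariant classes produced by the Kirwan map to the specific generators $R$ and $V_j$ chosen by Hausmann-Knutson in Theorem~\ref{hauscohom}, since those generators enter somewhat implicitly through their defining relations rather than via explicit cocycle representatives. Pinning down signs and the precise factor of $2$ requires unwinding their construction carefully; without that, one risks off-by-sign or off-by-half errors that are invisible at the level of the ring-theoretic presentation but essential for the Chern-class identity. A secondary difficulty is justifying the asymmetric treatment of $j=n$ versus $j<n$, which is an artifact of the presentation rather than the geometry, and so requires a careful translation between the bundle-theoretic and ring-theoretic pictures.
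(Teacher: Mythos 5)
The paper does not prove this proposition; it is a verbatim citation of Hausmann and Knutson \cite[Prop 7.3]{hausknut}, so there is no in-paper argument to compare against.

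Your geometric setup (realizing $\mathcal{N}(\ell)$ as a symplectic reduction and $A_j(\ell)$ as the residual $S^1$-bundle after reducing by the stabilizer of the north pole) is the right frame, but both of the steps that carry the actual content are asserted rather than established, and one of them is internally inconsistent as written. For $c_n(\ell)=-R$, you argue that because $V_n$ is omitted from the generating set, $c_n$ ``must'' be a multiple of $R$, and then ``fix the sign convention.'' This reverses the logic: $R$ is a specific class in Hausmann--Knutson's construction, and the equality $c_n=-R$ must be checked against that definition rather than read off from the shape of the presentation; as written, nothing in your argument rules out $c_n = R$ or $c_n = -2R$. For $j<n$, the square-root idea in the form you give it cannot produce the target formula: if $V_j$ were the Chern class of a square root of $A_j(\ell)\otimes A_n(\ell)^{*}$, you would get $c_j - c_n = 2V_j$, hence $c_j = c_n + 2V_j = -R + 2V_j$, not $R+2V_j$. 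To land on the stated identity you would instead need a square root of $A_j(\ell)\otimes A_n(\ell)$ (no dual), and in either case both the existence of such a square root and the identification of its Chern class with $V_j$ are exactly the facts the proposition encodes --- they are not supplied by the proposal but deferred with ``should then emerge.'' Your own closing paragraph flags that matching the Kirwan-image classes to $R$ and $V_j$ and pinning down the factor of $2$ is ``the main obstacle''; that obstacle is the proposition. Until the matching is carried out concretely, for example by unwinding Hausmann--Knutson's explicit construction of $V_j$ and its relation to the bundles $A_j(\ell)$, this remains a strategy outline rather than a proof.
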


Hausmann and Knutson use these Chern classes to determine a very nice expression for the cohomology class for the symplectic form of $\mathcal{N}(\ell)$.

\begin{prop}[\cite{hausknut}, Remark 7.5]\label{haussymp}
If $\ell \in \Z^n$, then the symplectic form $[\omega] \in H^2(\mathcal{N}(\ell))$ is given by $$[\omega] = \sum\limits_{i=1}^n \ell_i c_i(\ell)$$
\end{prop}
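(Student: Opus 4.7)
The plan is to realize $\mathcal{N}(\ell)$ as a Marsden--Weinstein symplectic reduction and to identify each Chern class $c_j(\ell)$ with the pullback, to the level set $\mu^{-1}(0)$, of the area class on the $j$-th $S^2$-factor. Equip $(S^2)^n$ with the product symplectic form $\Omega_\ell = \sum_{i=1}^n \ell_i\, p_i^*\omega_{S^2}$, where $p_i$ is the $i$-th projection and $\omega_{S^2}$ is normalized so that $[\omega_{S^2}]$ generates $H^2(S^2;\Z)$. The diagonal $SO(3)$-action is Hamiltonian with moment map $\mu(z_1,\dots,z_n) = \sum_i \ell_i z_i \in \mathfrak{so}(3)^* \cong \R^3$. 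By genericity and non-degeneracy no configuration is collinear, so $SO(3)$ acts freely on $\mu^{-1}(0)$, making $\pi:\mu^{-1}(0) \to \mathcal{N}(\ell) = \mu^{-1}(0)/SO(3)$ a principal $SO(3)$-bundle.

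By Marsden--Weinstein--Meyer, $\pi^*\omega = \iota^*\Omega_\ell$ where $\iota:\mu^{-1}(0) \hookrightarrow (S^2)^n$. Writing $z_j = p_j \circ \iota$, this reads $\pi^*[\omega] = \sum_{j=1}^n \ell_j\, z_j^*[\omega_{S^2}]$ in $H^2(\mu^{-1}(0))$. It then suffices to show $z_j^*[\omega_{S^2}] = \pi^* c_j(\ell)$. Each $z_j$ is $SO(3)$-equivariant and satisfies $z_j^{-1}(N) = A_j(\ell)$ for $N$ the north pole; identifying $S^2 = SO(3)/SO(2)_N$, the equivariant pullback square realizes $\pi^* A_j(\ell)$ as $z_j^*(SO(3)\to S^2)$ as principal $SO(2)$-bundles over $\mu^{-1}(0)$. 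With the chosen normalization $[\omega_{S^2}]$ agrees with the first Chern class of the homogeneous bundle $SO(3)\to S^2$, and naturality of Chern classes yields the identification.

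Combining these gives $\pi^*[\omega] = \pi^*\sum_j \ell_j c_j(\ell)$. The rational cohomology of the fiber $SO(3)$ is concentrated in degrees $0$ and $3$, so the Serre spectral sequence of $\pi$ forces $\pi^*$ to be injective on $H^2$, and the formula descends to $\mathcal{N}(\ell)$.

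The main obstacle is sign and normalization bookkeeping in the Chern-class identification. The bundle $SO(3)\to S^2$ has $c_1$ equal to $\pm 2$ times the integer generator of $H^2(S^2;\Z)$ (reflecting $\pi_1(SO(3)) = \Z/2$), and $\pi^* A_j(\ell)$ can differ from $z_j^*(SO(3))$ by an $SO(2)$-inversion. Pinning down a normalization of $\omega_{S^2}$ so the coefficient reproduces exactly $\ell_j$ (rather than $\pm\ell_j/2$) requires matching the sign conventions of Hausmann--Knutson, already visible in their formula $c_n(\ell) = -R$.
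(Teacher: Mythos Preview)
The paper does not give its own proof of this proposition: it is stated with attribution to \cite{hausknut}, Remark 7.5, and immediately used. So there is no argument in the paper to compare against; the author is simply quoting the result.

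Your sketch is essentially the Hausmann--Knutson argument: realize $\mathcal{N}(\ell)$ as the symplectic reduction of $\bigl((S^2)^n,\sum_i \ell_i\,p_i^*\omega_{S^2}\bigr)$ by the diagonal $SO(3)$-action, and identify the contribution of each factor with the Chern class of the associated circle bundle $A_j(\ell)$. The structure is sound and the use of injectivity of $\pi^*$ on $H^2$ via the Serre spectral sequence is a clean way to descend the identity. The one genuine loose end is exactly the one you flag yourself: the principal $SO(2)$-bundle $SO(3)\to S^2$ has first Chern class of absolute value $2$, not $1$, so with the normalization $\int_{S^2}\omega_{S^2}=1$ you would obtain $2\ell_j$ rather than $\ell_j$. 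In Hausmann--Knutson this is absorbed into their choice of symplectic form (the relevant reduced form satisfies $[\omega]=-\tfrac{1}{2}\sum_i \ell_i c_i$ in one of their conventions, and the sign/half is then packaged into the statement of Remark 7.5). Until you fix that bookkeeping your argument proves the formula only up to an overall nonzero rational scalar, which, incidentally, is already enough for every application made of Proposition~\ref{haussymp} in the present paper (Theorems~\ref{sympTC} and~\ref{NlTC} only need that $\iota^*[\omega_\ell]$ is a nonzero multiple of $[\omega_{\ell^\P}]$).
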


Finally, it is well-known that $\mathcal{N}(\ell)$ is a simply-connected manifold of dimension $2(n-3)$ when $\ell$ is of size $n$ (see \cite[\S 1]{hausknut}, \cite[Lemma 10.3.33]{hausbook}).  Since it is a closed, symplectic, simply-connected manifold, we can compute $\TC(\mathcal{N}(\ell))$ as a corollary of \ref{sympTC} with $m=n$.  This is computed directly using Theorem \ref{hauscohom} in \cite{davis}.

\begin{prop}
For $\ell$ generic and non-degenerate of size $n$, $\TC(\mathcal{N}(\ell)) = 2n-5$.
\end{prop}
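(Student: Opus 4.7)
The plan is to derive the proposition as an immediate special case of Theorem \ref{sympTC}, taking the submanifold $Y$ to equal the whole manifold $X = \mathcal{N}(\ell)$.

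First, I would verify the tautological identification $\TC(X,X) = \TC(X)$. Unpacking the definition of relative topological complexity with $Y = X$ gives $P_{X \times X} = PX$, and the fibration $\pi\colon P_{X \times X} \to X \times X$, $\gamma \mapsto (\gamma(0), \gamma(1))$, is precisely the fibration whose Schwarz genus defines $\TC(X)$. In particular, $\TC(\mathcal{N}(\ell), \mathcal{N}(\ell)) = \TC(\mathcal{N}(\ell))$.

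Next, I would check the hypotheses of Theorem \ref{sympTC} with $X = Y = \mathcal{N}(\ell)$ and $\omega_Y = \omega_X$. The discussion preceding the statement records that $\mathcal{N}(\ell)$ is a closed, simply-connected, symplectic manifold of real dimension $2(n-3)$, so in the notation of Theorem \ref{sympTC} the two dimension parameters are both equal to $n-3$ (here the theorem's $n$ and $m$ should not be confused with the size $n$ of the length vector). The compatibility condition $\iota^*([\omega_X]) = [\omega_Y]$ holds automatically because the inclusion $\iota\colon Y \hookrightarrow X$ is the identity.

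Plugging in, Theorem \ref{sympTC} yields $\TC(\mathcal{N}(\ell)) = \TC(\mathcal{N}(\ell), \mathcal{N}(\ell)) = (n-3) + (n-3) + 1 = 2n - 5$, as claimed. There is essentially no obstacle: the substantive work, namely the dimension upper bound via Corollary \ref{schwarzubcor} and the cohomological lower bound coming from the top power of the zero-divisor $[\omega_X] \otimes 1 - 1 \otimes [\omega_Y]$, has already been packaged into Theorem \ref{sympTC}, and the structural facts about $\mathcal{N}(\ell)$ (closedness, simple-connectivity, the symplectic structure, and the dimension) have been supplied by Hausmann and Knutson.
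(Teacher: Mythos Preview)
Your proposal is correct and matches the paper's approach exactly: the paper does not give a separate proof but simply remarks that the result is a corollary of Theorem \ref{sympTC} with the two dimension parameters equal (and cites \cite{davis} for a direct computation). Your added observation that $\TC(X,X)=\TC(X)$ makes explicit what the paper leaves implicit.
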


\subsection{Edge-Identifying Submanifolds}
There is a natural way to form submanifolds within $\mathcal{N}(\ell)$ by restricting our attention to configurations where selected edges are aligned together.  This space of configurations where selected edges are aligned forms a submanifold of $\mathcal{N}(\ell)$ which is homeomorphic, in some cases, to $\mathcal{N}(\ell^\P)$ for a different, but related length vector $\ell^\P$.

\begin{defn}\label{eilvdefn}
Let $\ell = (\ell_1, \dots , \ell_n)$ be a length vector and let $\P = (\P_1, \dots, \P_m)$ be an ordered set partition of $[n]$ into $m$ parts.  We say that a \textit{edge-identified length vector} from $\ell$ is a vector $\ell^\P = (\ell^\P_1, \dots , \ell^\P_m)$ such that $$\ell^\P_k = \sum\limits_{i\in \P_k}\ell_i.$$
\end{defn}

As this is a novel method of describing these spaces, we present some examples of edge-identified length vectors.

\begin{example}\label{eilvexs}
Let $\ell = (1,1,2,3,5,7)$.  Note that $\ell$ is a generic, non-degenerate, ordered length vector.  We give four examples of edge-identified length vectors of $\ell$.

\begin{itemize}
\item $\ell^{\P'} = (1,2,3,6,7)$:  Here, we have combined $\ell_2$ and $\ell_5$ into a single edge.  Explicitly, the ordered set partition is $\P' = (\{1\}, \{3\}, \{4\}, \{2,5\}, \{6\})$ giving us that $\ell^{\P'}_4 = \ell_2 + \ell_5$.

\item $\ell^{\P''}=(1,1,3,5,9)$:  Here, we have combined $\ell_3$ and $\ell_6$ into a single edge.  Explicitly, the ordered set partition is $\P'' = (\{1\}, \{2\}, \{4\}, \{5\}, \{3,6\})$ giving us that $\ell^{\P''}_5 = \ell_3 + \ell_6$.  Notice that we can identify other edges with the last edge $\ell_6$ as we do in this example.

\item $\ell^{\P'''}=(4,7,8)$:  Here, we have combined several of the edges together.  Explicitly, the ordered set partition is $\P'''=(\{1,4\}, \{6\}, \{2,3,5\})$ giving us that $\ell^{\P'''}_1 = \ell_1 + \ell_4$, and $\ell^{\P'''}_3 = \ell_2 + \ell_3 + \ell_5$.  Notice that it is possible, as in this example, to supplant the largest length by identifying other edges.  We can always permute the ordered set partition in order to end up with an ordered edge-identified length vector if we desire this.

\item $\ell^{\P''''}=(1,1,7,10)$:  Here, we have combined $\ell_3$, $\ell_4$, and $\ell_5$ into a single large edge.  In fact, $\ell^{\P''''}_4 = \ell_3 + \ell_4 + \ell_5 > \ell^{\P''''}_1 + \ell^{\P''''}_2 + \ell^{\P''''}_3$, giving us a degenerate length vector.  Thus, edge-identified length vectors need not preserve non-degeneracy in general.
\end{itemize}
\end{example}

Notice that all edge-identified length vectors preserve genericity, but they could fail to preserve non-degeneracy.  If we assume $\ell$ is non-degenerate, then we can preserve non-degeneracy by only identifying edges whose indices form short subsets.

The core concern in the above examples is which length in $\ell$ is assigned to a particular length in $\ell^\P$.  We can encode this in the function $\phi:[n] \to [m]$ given by $\phi(i)=j \iff i \in \P_j$ where $\P_j$ is the $j$th set of $\P$.  This function controls the topology of $\mathcal{N}(\ell^\P)$, but it also controls the inclusion map $\mathcal{N}(\ell^\P) \into \mathcal{N}(\ell)$.  We see this in the following proposition.

\begin{prop}\label{inducedcohom}
Let $\ell$ be a generic, non-degenerate length vector with $\ell^\P$ a non-degenerate edge-identified length vector of $\ell$.  Then the inclusion induced map $\iota^*:H^*(\mathcal{N}(\ell)) \to H^*(\mathcal{N}(\ell^\P))$ acts on the Chern classes by $\iota^*(c_j(\ell)) = c_{\phi(j)}(\ell^\P)$.
\end{prop}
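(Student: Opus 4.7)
The plan is to reduce the statement to the naturality of Chern classes, so the task is really to identify the pullback of the $SO(2)$-bundle $A_j(\ell)\to \mathcal{N}(\ell)$ along the inclusion $\iota:\mathcal{N}(\ell^\P)\hookrightarrow \mathcal{N}(\ell)$ with the bundle $A_{\phi(j)}(\ell^\P)\to \mathcal{N}(\ell^\P)$.

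First I would make the inclusion map explicit. Given $[(w_1,\dots,w_m)]\in \mathcal{N}(\ell^\P)$, define $\iota[(w_1,\dots,w_m)] = [(z_1,\dots,z_n)]$ by the rule $z_i = w_{\phi(i)}$; that is, each edge in the original polygon is sent to the common direction assigned to its block under $\P$. A quick check shows this is well defined on $SO(3)$-orbits, and the closure condition is preserved since
$$\sum_{i=1}^n \ell_i z_i \;=\; \sum_{k=1}^m \Bigl(\sum_{i\in \P_k} \ell_i\Bigr) w_k \;=\; \sum_{k=1}^m \ell^\P_k w_k \;=\; \vec 0 .$$
This confirms the existence of $\iota$ at the level of configurations.

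Next I would identify the pullback bundle. By Definition \ref{bdledefn}, $A_j(\ell)$ is the space of representatives with $z_j = (0,0,1)$, fibered over $\mathcal{N}(\ell)$ by the residual $SO(2)$-action of rotations about the $z$-axis. Pulling back along $\iota$, a point of $\iota^*A_j(\ell)$ is represented by a tuple $(w_1,\dots,w_m)$ with $\sum \ell^\P_k w_k = \vec 0$ together with a lift to a representative $(z_1,\dots,z_n)$ of $\iota[(w_1,\dots,w_m)]$ satisfying $z_j = (0,0,1)$. Since the lift is forced by $z_i = w_{\phi(i)}$ (up to the $SO(2)$ ambiguity that is absorbed by the quotient), the condition $z_j = (0,0,1)$ is exactly the condition $w_{\phi(j)} = (0,0,1)$. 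Hence the map $(w_1,\dots,w_m)\mapsto (w_1,\dots,w_m)$ provides an explicit $SO(2)$-equivariant bundle isomorphism $\iota^*A_j(\ell)\cong A_{\phi(j)}(\ell^\P)$ over $\mathcal{N}(\ell^\P)$.

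Finally, naturality of the first Chern class gives
$$\iota^*(c_j(\ell)) \;=\; \iota^*\bigl(c_1(A_j(\ell))\bigr) \;=\; c_1\bigl(\iota^*A_j(\ell)\bigr) \;=\; c_1\bigl(A_{\phi(j)}(\ell^\P)\bigr) \;=\; c_{\phi(j)}(\ell^\P),$$
which is the desired identity. The only nontrivial step is the bundle identification in the middle paragraph; the main obstacle is being careful about the $SO(3)$-ambiguity in picking representatives so that the section-level map $(w_i)\mapsto (w_{\phi(i)})$ really does descend to a bundle isomorphism rather than merely a fiberwise bijection. Once one checks that rotations about the $z$-axis act compatibly on both sides (which is immediate since the $SO(2)$ stabilizer of $(0,0,1)$ acts diagonally on the collapsed and uncollapsed representatives), the proof is complete.
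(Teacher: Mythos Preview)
Your proposal is correct and follows essentially the same approach as the paper: both reduce to the naturality of the first Chern class and then identify the pullback bundle $\iota^*A_j(\ell)$ with $A_{\phi(j)}(\ell^\P)$ by observing that the condition $z_j=(0,0,1)$ becomes $w_{\phi(j)}=(0,0,1)$ under the inclusion. Your version is in fact more explicit than the paper's, spelling out the inclusion $\iota$ at the level of representatives and verifying the closure condition and $SO(2)$-equivariance, whereas the paper argues more informally from the geometric description of $A_j(\ell)$ as polygons with the $j$th edge parallel to the $z$-axis.
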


\begin{proof}
Considering $\iota: \mathcal{N}(\ell^\P) \to \mathcal{N}(\ell)$, we know that $\iota^*(c_j(\ell)) = \iota^*(c_1(A_j(\ell))) = c_1(\iota^*(A_j(\ell)))$.  Thus, what we need to show is that the pullback bundle $\iota^*(A_j(\ell)) = A_{\phi(j)}(\ell^\P)$.

We can think of $A_j(\ell)$ as the space of polygons in $\R^3$ with the $j$th edge parallel to the $z$-axis.  Since $\mathcal{N}(\ell^e)$ identifies $\ell_j$ as part of $\ell_{\phi(j)}^\P$, $\iota^*(A_j(\ell))$ has all the edges in $\ell_{\phi(j)}^\P$ parallel to the $z$-axis.  Thus, $\iota^*(A_j(\ell)) = A_{\phi(j)}(\ell^\P)$.  And so, $\iota^*(c_j(\ell)) = c_1(A_{\phi(j)}(\ell^\P)) = c_{\phi(j)}(\ell^\P)$.

\end{proof}

With this information, we can determine the relative topological complexity for pairs of spatial polygon spaces.

\begin{thm}\label{NlTC}
Let $\ell$ be a generic, non-degenerate length vector with $\ell^\P$ a non-degenerate edge-identified length vector of $\ell$ as in Definition \ref{eilvdefn}.  Then, $\TC(\mathcal{N}(\ell),\mathcal{N}(\ell^\P))=n+m-5$.
\end{thm}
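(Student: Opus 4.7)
The plan is to apply Theorem \ref{sympTC} directly to $X = \mathcal{N}(\ell)$ and $Y = \mathcal{N}(\ell^\P)$. Since $\mathcal{N}(\ell)$ is a closed, simply-connected, symplectic manifold of dimension $2(n-3)$, and $\mathcal{N}(\ell^\P)$ sits inside it as a symplectic submanifold of dimension $2(m-3)$ (its symplectic form is described in the same Hausmann–Knutson framework), all that is needed to land at $\TC = (n-3)+(m-3)+1 = n+m-5$ is to verify the cohomological compatibility $\iota^*[\omega_{\mathcal{N}(\ell)}] = [\omega_{\mathcal{N}(\ell^\P)}]$.

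The main computation combines Propositions \ref{haussymp} and \ref{inducedcohom}. First, after rescaling $\ell$ by a positive rational (which changes $\mathcal{N}(\ell)$ only by diffeomorphism) we may assume $\ell \in \Z^n$, so that $\ell^\P \in \Z^m$ as well and Proposition \ref{haussymp} applies to both manifolds. Then I would compute
\[
\iota^*[\omega_{\mathcal{N}(\ell)}] \;=\; \iota^*\!\left(\sum_{i=1}^n \ell_i\, c_i(\ell)\right) \;=\; \sum_{i=1}^n \ell_i\, \iota^*(c_i(\ell)) \;=\; \sum_{i=1}^n \ell_i\, c_{\phi(i)}(\ell^\P),
\]
using Proposition \ref{inducedcohom} in the last step. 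Grouping the sum by fibers of $\phi \colon [n] \to [m]$ and invoking the defining identity $\ell^\P_k = \sum_{i \in \P_k} \ell_i$ of Definition \ref{eilvdefn} gives
\[
\sum_{i=1}^n \ell_i\, c_{\phi(i)}(\ell^\P) \;=\; \sum_{k=1}^m \left(\sum_{i \in \P_k} \ell_i\right) c_k(\ell^\P) \;=\; \sum_{k=1}^m \ell^\P_k\, c_k(\ell^\P) \;=\; [\omega_{\mathcal{N}(\ell^\P)}].
\]
This is the desired equality.

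With $\iota^*[\omega_{\mathcal{N}(\ell)}] = [\omega_{\mathcal{N}(\ell^\P)}]$ established, Theorem \ref{sympTC} immediately yields $\TC(\mathcal{N}(\ell),\mathcal{N}(\ell^\P)) = (n-3) + (m-3) + 1 = n+m-5$. The only potential obstacle is a bookkeeping one: ensuring that the non-degeneracy hypothesis on $\ell^\P$ really guarantees that $\mathcal{N}(\ell^\P)$ is a closed symplectic submanifold of $\mathcal{N}(\ell)$ whose symplectic form is the restriction of $\omega_{\mathcal{N}(\ell)}$ (and hence fits into the Hausmann–Knutson description). This is implicit in the geometric description of $\mathcal{N}(\ell^\P)$ as the locus of configurations in $\mathcal{N}(\ell)$ with each block $\P_k$ of edges kept collinear and coherently oriented, so the inclusion is a symplectic embedding and Proposition \ref{haussymp} applies on both sides. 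Once this geometric point is granted, the cohomological calculation above is routine, and the theorem follows.
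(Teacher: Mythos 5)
Your proposal is correct and follows essentially the same route as the paper: apply Theorem \ref{sympTC} after verifying $\iota^*[\omega] = [\omega_\P]$ via Propositions \ref{haussymp} and \ref{inducedcohom}, grouping the sum over the blocks of $\P$. The only additions you make — rescaling to ensure $\ell \in \Z^n$ and flagging that $\mathcal{N}(\ell^\P)$ must indeed be a symplectic submanifold with the restricted form — are reasonable tidyings of points the paper treats as implicit.
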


\begin{proof}
First, notice that $(\mathcal{N}(\ell),\omega)$ is a simply-connected symplectic manifold of dimension $2(n-3)$ and $(\mathcal{N}(\ell^\P), \omega_\P)$ is a submanifold of dimension $2(m-3)$ with its own symplectic structure.  We need only verify that $\iota^*([\omega]) = [\omega_\P]$, and then Theorem \ref{sympTC} yields the result.  We show this in the following computation.

\begin{eqnarray*}
\iota^*([\omega]) =& \iota^*\bigg( \sum\limits_{i=1}^n \ell_i c_i(\ell) \bigg) & \text{  (by Proposition \ref{haussymp})}\\
=&\sum\limits_{i=1}^n \ell_i \iota^*c_i(\ell) & \\
=&\sum\limits_{i=1}^n \ell_i c_{\phi(i)}(\ell^\P) &\text{  (by Proposition \ref{inducedcohom})} \\
=&\sum\limits_{j=1}^m \ell_j^\P c_{j}(\ell^\P) & \\
=&[\omega_\P] & \text{  (by Proposition \ref{haussymp})}
\end{eqnarray*}
\end{proof}

\end{document}